\newtheorem{theorem}{Theorem}[section]
\newtheorem{lemma}[theorem]{Lemma}
\newtheorem{corollary}[theorem]{Corollary}
\theoremstyle{definition}
\newtheorem{definition}[theorem]{Definition}
\newtheorem{example}[theorem]{Example}
\theoremstyle{remark}
\newtheorem{remark}[theorem]{Remark}
\newcommand{\C}{\mathcal C}
\newcommand{\D}{\mathcal D}
\newcommand{\F}{\mathcal F}
\newcommand{\HH}{\mathcal H}
\newcommand{\SE}{\mathcal S}
\newcommand{\T}{\mathcal T}
\newcommand{\U}{\mathcal U}
\newcommand{\V}{\mathcal V}
\newcommand{\X}{\mathcal X}
\newcommand{\Y}{\mathcal Y}
\newcommand{\m}{\mathfrak m}
\newcommand{\p}{\mathfrak p}
\newcommand{\G}{\varGamma }
\newcommand{\Ass}{\mathrm {Ass}}
\newcommand{\Hom}{\mathrm {Hom}}
\newcommand{\Spec}{\mathrm {Spec}}
\newcommand{\Supp}{\mathrm {Supp}}
\newcommand{\RMod}{R\text{-}\mathrm{Mod}}
\newcommand{\sh}{\mathchar`-}
\begin{document}
\title[Mutations of torsion theories]{Mutations of ordinary torsion theories and generalized torsion theories connected by a Serre subcategory} %[short title]{main title}

%=Information for first author================================================
\author{Takeshi Yoshizawa} 
\address{National Institute of Technology (Kosen), Toyota College, 2-1 Eiseicho, Toyota, Aichi, Japan, 471-8525}
\email{tyoshiza@toyota-ct.ac.jp}
\subjclass[2020]{Primary 13D30; Secondary 13C60} %13D30 Torsion theory for commutative rings %13C60=Module categories
\keywords{Torsion theory, Serre subcategory}
%\thanks{This work was supported by JSPS KAKENHI Grant Number JP23K03060.}
%===================================================================
%
%
%
%
%
%
%=abstract=============================================================
\begin{abstract}%74
Understanding how torsion theories are described and constructed is crucial to the study of torsion theory. 
Mutations of torsion theories have been studied as a method of constructing another torsion theory from a given one. 
We have already obtained how to mutate ordinary torsion theories into generalized torsion theories associated with a Serre subcategory. 
The paper investigates when the generalized torsion theories give ordinary torsion theories and when ordinary torsion theories provide each other. 
\end{abstract}
%======================================================= =============
%
%
%
\maketitle
%
%
%
%=introduction============================================================
\section{Introduction}
Dickson \cite{D-1966} introduced the concept of torsion theory in an abelian category. 
Owing to its applicability, the torsion theory is highly valuable in various fields, including ring theory, representation theory, and category theory. 
For instance, in representation theory, we investigate the behavior of the category of modules using the Hasse quiver of the complete lattice of torsion classes. 

The study of how to describe and construct torsion theories is crucial. 
An indispensable result is Gabriel's classification of hereditary torsion theories in the category of modules with specialization closed subsets of the spectrum of ring, as presented in his work \cite{G-1962}. (See also \cite[Proposition 2.3]{HPST-2014} and \cite[Corollary 2.7]{K-2008}.) 

On the other hand, it is also important to investigate how to construct another torsion theory or t-structure from a given one in the category of modules or a triangulated category.

%3
%========================================
%= brick labeling
%========================================
As a specific example, a torsion theory can mutate into another torsion theory using brick modules. 
Let us consider the complete lattice $\mathrm{tors}\, A$ of torsion classes in the category of finitely generated $A$-modules over a finite-dimensional $k$-algebra $A$ for a field $k$ and the {\it heart} $\HH_{[\U, \T]}=\U^{\perp} \cap \T$ of the interval $[\U, \T]$ in $\mathrm{tors}\, A$. 
\cite[Theorems 3.3 and 3.4]{DIRRT-2023} deduce that there exists an arrow $q : \T \to \U$ in the Hasse quiver $\mathrm{Hasse}(\mathrm{tors}\, A)$ if and only if there is a unique brick $S_{q} \in \mathrm{brick}\, \HH_{[\U, \T]}$ up to isomorphism. 
In this case, one has the equality $\HH_{[\U, \T]}=\mathrm{Filt}\, S_{q}$. 
This brick $S_{q}$ is called the {\it brick label} of the arrow $q$. 
The brick labeling plays a useful role in the computation of $\mathrm{Hasse}(\mathrm{tors} A)$. 
Indeed, one has the equality $\T=\U*\mathcal{H}_{[\U, \T]}$, in other words, the torsion class $\U$ mutates into another torsion class $\T$ by the brick $S_{q}$. (See also the dual version of \cite[Lemma 2.8]{ES-2022}.)

%3
%========================================
%= HRS-tilt
%========================================
As another example, let $\HH_{(\X,\Y)}=\X[-1] \cap \Y$ be the {\it heart} of $t$-structure $(\X, \Y)$ in a triangulated category. 
Happel, Reiten, and Smal$\text{\o}$ \cite{HRS-1996} introduced methods to construct another $t$-structure from the given $t$-structure $(\X, \Y)$ using a torsion theory $(\U, \V)$ in the heart $\HH_{(\X, \Y)}$. 
To be more precise, the $t$-structure $(\X, \Y)$ mutates into the {\it left HRS-tilt} $(\X[1]*\U[1], \V[1]*\Y)$ and the {\it right HRS-tilt} $(\X*\U, \V*(\Y[-1]))$ at the pair $(\U, \V)$.

%10
%========================================
%= $\SE$-torsion theory
%========================================
There has been a growing interest in how a given torsion theory mutates into another one. 
Therefore, the paper focuses on the mutation of torsion theories in the category $\RMod$ of $R$-modules over a commutative Noetherian ring $R$. 
As a method of mutation of torsion theory, the paper uses generalized torsion theories connected by a Serre subcategory $\SE$ of $\RMod$. 
In analogy with the definition of ordinary torsion theory, the paper \cite{Y-2020} introduced this generalized torsion theory as follows. 
A pair $(\T, \F)$ of subcategories of $\RMod$ is called an {\it $\SE$-torsion theory} in $\RMod$ if it satisfies the following three conditions: 
\begin{enumerate}%\setlength{\leftskip}{-3.5pt}
\item[(TT1)]\,  If $T \in \T$, $F \in \F$ and $\varphi \in \Hom_{R}(T, F)$, then $\varphi(T)$ lies in $\SE$. 

%------
\item[(TT2)] If an $R$-module $M$ satisfies $\varphi(M) \in \SE$ for all $F \in \F$ and all $\varphi \in \Hom_{R}(M, F)$, then $M$ lies in $\T$. 

%------
\item[(TT3)] If an $R$-module $M$ satisfies $\varphi(T) \in \SE$ for all $T \in \T$ and all $\varphi \in \Hom_{R}(T, M)$, then $M$ lies in $\F$. 
\end{enumerate} 
Furthermore, in this case, the Serre subcategory $\SE$ is called the {\it heart} of the $\SE$-torsion theory $(\T, \F)$.

%10
%========================================
%= purpose of the paper
%========================================
\cite[Proposition 5.1]{Y-2024} shows that an ordinary torsion theory $(\X, \Y)$ in $\RMod$ always provides the $\SE$-torsion theory $(\X*\SE, \SE*\Y)$, which is called the $\SE$-connection of the pair $(\X, \Y)$. 
By contrast, the purpose of this paper is to investigate when $\SE$-torsion theories give ordinary torsion theories and when ordinary ones provide each other. 
Specifically, as mutations for a given $\SE$-torsion theory $(\T, \F)$ in $\RMod$, we consider the following three pairs: (1) the {\it left $\SE$-separation} $\Psi^{L}_{\SE}((\T, \F))=(\T \cap {}^{\perp} \SE, \F)$, 
(2) the {\it right $\SE$-separation} $\Psi^{R}_{\SE}((\T, \F))=(\T, \SE^{\perp} \cap \F)$, 
(3) the {\it middle $(\SE, \U, \V)$-separation} $\Psi^{M}_{(\SE, \U, \V)}((\T, \F))=( (\T \cap {}^{\perp} \SE)*\U, \V*(\SE^{\perp} \cap \F))$ for a pair $(\U, \V)$ of subcategories of $\RMod$ that are contained in the heart $\SE$. 
Note that these three separations may not be ordinary torsion theories in $\RMod$.
Therefore, it is natural to investigate when each separation becomes an ordinary torsion theory in $\RMod$.

%10
%========================================
%= three preparations(mutation, canonical torsion theory, canonical heart)
%========================================
Let us now make three preparations for stating our main results. 
Firstly, if each of the above separations is a torsion theory in $\RMod$, then the separations $\Psi^{L}_{\SE}((\T, \F))$, $\Psi^{R}_{\SE}((\T, \F))$, and $\Psi^{M}_{(\SE, \U, \V)}((\T, \F))$ are referred to as the {\it left mutation at the heart} $\SE$, the {\it right mutation at the heart} $\SE$, and the {\it middle mutation at the triple} $(\SE, \U, \V)$ of the $\SE$-torsion theory $(\T, \F)$, respectively.   
Secondly, an $\SE$-torsion theory $(\T, \F)$ is called {\it canonical} if the equality $\T*\F=\RMod$ holds. 
Finally, we say that the heart $\SE$ of an $\SE$-torsion theory $(\T, \F)$ is {\it left canonical} (respectively, {\it right canonical}) if one has the equality $(\T \cap {}^{\perp} \SE)*(\T \cap \SE)=\T$ (respectively, the equality $(\SE \cap \F)*(\SE^{\perp} \cap \F)=\F$).

%10 
%========================================
%= Main theorem
%========================================
The following main result of this paper characterizes when a given $\SE$-torsion theory $(\T, \F)$ mutates into three torsion theories in $\RMod$ as the left mutation $\Psi^{L}_{\SE}((\T, \F))$, the right mutation $\Psi^{R}_{\SE}((\T, \F))$, and the middle mutation $\Psi^{M}_{(\SE, \U, \V)}((\T, \F))$. 
Additionally, we can also verify that these three mutations generally differ from each other; see Example \ref{example-mutations}. 

\begin{theorem}[Theorems \ref{left-mutation}, \ref{right-mutation}, and \ref{middle-mutation}]\label{main-theorem}\setlength{\leftmargini}{18pt}
Let $(\T, \F)$ be an $\SE$-torsion theory and let $(\U, \V)$ be a pair of subcategories of $\RMod$ that are contained in the heart $\SE$. 
Then the following statements $(1) \sh (3)$ hold. 
\begin{enumerate}
\item[(1)] The following conditions $\mathrm{(a)}$ and $\mathrm{(b)}$ are equivalent. 
\begin{enumerate}
\item[(a)] The left $\SE$-separation $\Psi^{L}_{\SE}((\T, \F))=(\T \cap {}^{\perp} \SE, \F)$ is a torsion theory in $\RMod$. 

\item[(b)] The pair $(\T, \F)$ is canonical and the heart $\SE$ is left canonical. 
\end{enumerate}

%-----
\item[(2)] The following conditions $\mathrm{(c)}$ and $\mathrm{(d)}$ are equivalent. 
\begin{enumerate}
\item[(c)] The right $\SE$-separation $\Psi^{R}_{\SE}((\T,\F))=(\T, \SE^{\perp} \cap \F)$ is a torsion theory in $\RMod$. 

\item[(d)] The pair $(\T, \F)$ is canonical and the heart $\SE$ is right canonical. 
\end{enumerate}

%-----
\item[(3)] The following conditions $\mathrm{(e)}$ and $\mathrm{(f)}$ are equivalent. 
\begin{enumerate}
\item[(e)] The middle $(\SE, \U, \V)$-separation $\Psi^{M}_{(\SE, \U, \V)}((\T, \F))=\left( (\T \cap {}^{\perp} \SE)*\U, \V*(\SE^{\perp} \cap \F) \right)$ is a torsion theory in $\RMod$. 

\item[(f)] The following three conditions $\mathrm{(i)} \sh \mathrm{(iii})$ are satisfied: 
\begin{enumerate}
\item[(i)] The pair $(\T, \F)$ is canonical. 

\item[(ii)] The heart $\SE$ is left canonical and right canonical. 

\item[(iii)] One has the equalities $\Hom_{R}(\U, \V)=0$ and $\U*\V=\SE$. 
\end{enumerate}
\end{enumerate}

\end{enumerate}
\end{theorem}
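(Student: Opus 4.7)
The plan is to handle parts~(1) and~(2) by a common dual pattern and to devote the main effort to part~(3). I will use repeatedly, and often silently, the following standing consequences of (TT1)--(TT3) and $\SE$ being Serre: $\T$ is closed under quotients and extensions, $\F$ is closed under submodules and extensions, $\SE \subseteq \T \cap \F$, and both $\T$ and $\F$ are closed under $\SE$-extensions and $\SE$-quotients. I will also use the elementary orthogonalities $\SE \cap {}^{\perp}\SE = 0 = \SE \cap \SE^{\perp}$. For brevity, in part~(3) write $\T' := (\T \cap {}^{\perp}\SE) * \U$ and $\F' := \V * (\SE^{\perp} \cap \F)$.

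For part~(1), the direction $(a) \Rightarrow (b)$ is obtained by applying the torsion decomposition of $\Psi^{L}_{\SE}((\T,\F))$ first to an arbitrary $M$ (yielding canonicality $\T * \F = \RMod$, since $\T \cap {}^{\perp}\SE \subseteq \T$) and then to $T \in \T$: in the resulting $0 \to T' \to T \to T'' \to 0$ with $T' \in \T \cap {}^{\perp}\SE$ and $T'' \in \F$, closure of $\T$ under quotients forces $T'' \in \T$, and (TT1) applied to the quotient map $T \twoheadrightarrow T'' \in \F$ forces $T'' \in \SE$, giving left canonicality. Conversely, $(b) \Rightarrow (a)$ is a splicing: Hom-orthogonality $\Hom_{R}(\T \cap {}^{\perp}\SE, \F) = 0$ is immediate from (TT1) combined with the definition of ${}^{\perp}\SE$, and the required short exact sequence is built by first decomposing $M$ via canonicality and then refining the torsion part via left canonicality, using closure of $\F$ under $\SE$-extensions to absorb the middle piece. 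Part~(2) is dual.

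For part~(3), the direction $(e) \Rightarrow (f)$ follows by applying the decomposition of $\Psi^{M}_{(\SE,\U,\V)}((\T,\F))$ to four kinds of modules and collapsing intermediate pieces via the standing facts. Canonicality comes from an arbitrary $M$. Left canonicality comes from $T \in \T$: if its decomposition is $0 \to A \to T \to B \to 0$ with $A \in \T'$ and $B \in \F'$, then quotient-closure of $\T$ puts $B$ in $\T$, whereupon (TT1) forces the $\SE^{\perp}$-layer of $B$ into $\SE \cap \SE^{\perp} = 0$; the surviving layer structure regroups $T$ as $(\T \cap {}^{\perp}\SE)*(\T \cap \SE)$. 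Right canonicality is dual. The equality $\U * \V = \SE$ comes from $S \in \SE$, where both outer pieces of the decomposition collapse onto $\U$ and $\V$ respectively by virtue of ${}^{\perp}\SE \cap \SE = 0 = \SE^{\perp} \cap \SE$. Finally, $\Hom_{R}(\U,\V) = 0$ follows from the inclusions $\U \subseteq \T'$ and $\V \subseteq \F'$ obtained by taking $0$ as the vacant piece.

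The main obstacle is $(f) \Rightarrow (e)$, and specifically the production of the short exact sequence. A naive splicing of canonicality, left and right canonicality, and $\U * \V = \SE$ produces a six-term filtration of $M$ whose pieces arrive in the order $\T \cap {}^{\perp}\SE,\ \U,\ \V,\ \U,\ \V,\ \SE^{\perp} \cap \F$, which does not regroup cleanly as $\T' * \F'$. I will instead use a more economical refinement: apply canonicality to get $0 \to T \to M \to F \to 0$; use right canonicality and pullback to obtain $T \subseteq N \subseteq M$ with $M/N \in \SE^{\perp} \cap \F$ and $N/T \in \SE \cap \F$; and use left canonicality to produce $T_{1} \subseteq T$ with $T_{1} \in \T \cap {}^{\perp}\SE$ and $T/T_{1} \in \T \cap \SE$. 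The key observation is that $N/T_{1}$, being the extension of $N/T$ by $T/T_{1}$ with both terms in $\SE$, itself lies in $\SE$, so the equality $\U * \V = \SE$ supplies a \emph{single-step} decomposition $0 \to \widetilde{U} \to N/T_{1} \to \widetilde{V} \to 0$ with $\widetilde{U} \in \U$ and $\widetilde{V} \in \V$. Pulling $\widetilde{U}$ back to $P$ with $T_{1} \subseteq P \subseteq N$ and $P/T_{1} = \widetilde{U}$ then yields $0 \to P \to M \to M/P \to 0$ with $P \in \T'$ and $M/P \in \F'$, as required. The residual Hom-orthogonality $\Hom_{R}(\T',\F') = 0$ reduces, via the two-step filtrations defining $\T'$ and $\F'$, to four pairwise vanishings, each handled by (TT1) (with image absorbed by ${}^{\perp}\SE$ or $\SE^{\perp}$) or by the standing hypothesis $\Hom_{R}(\U,\V) = 0$ of~(iii).
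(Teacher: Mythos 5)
Your proof is correct, and for the converse implications it takes a genuinely different route from the paper. Both arguments treat the forward directions (separation is a torsion theory $\Rightarrow$ canonicality conditions) the same way, by decomposing special modules against the given separation and collapsing the layers that meet ${}^{\perp}\SE$ or $\SE^{\perp}$ inside $\SE$. The difference is in the converses. The paper (Theorems \ref{left-mutation}, \ref{right-mutation}, \ref{middle-mutation}) verifies the three clauses of Definition \ref{Heart-torsion-theory} directly: Hom-vanishing, then the two conditions ``$\Hom_R(M,\Y)=0 \Rightarrow M \in \X$'' and ``$\Hom_R(\X,M)=0 \Rightarrow M \in \Y$'', each by building a filtration of $M$ and invoking the Hom hypothesis to kill the offending layers. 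You instead use the equivalent characterization that Hom-vanishing together with $\X * \Y = \RMod$ suffices (cf. Remark \ref{basic-remark-ordinal}), and produce the canonical short exact sequence once, for an arbitrary $M$, with no Hom hypothesis to lean on. For the middle mutation this forces a genuine new idea: splicing canonicality, left canonicality, right canonicality and $\U*\V = \SE$ naively yields six layers in the wrong order, and your fix --- merging the two middle $\SE$-layers $T/T_1$ and $N/T$ into the single $\SE$-module $N/T_1$ before applying $\U*\V = \SE$ --- is clean and is not in the paper. What the paper's approach buys is that, in the presence of the Hom hypothesis, two of the layers vanish automatically, so one never needs to regroup four layers into two; what yours buys is a shorter, more uniform verification (one SES construction instead of two separate vanishing arguments), at the modest cost of having to explain the layer-merging trick and then also checking the four pairwise Hom-vanishings for $\Hom_R(\T',\F')$. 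Both are complete; yours is arguably the more economical presentation.
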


%10
%========================================
%= A consequence of main theorem
%========================================
The second purpose of this paper is to investigate when an ordinary torsion theory in $\RMod$ mutates into another one. 
A consequence of Theorem \ref{main-theorem} characterizes when a given torsion theory $(\X, \Y)$ in $\RMod$ mutates into three types of torsion theories in $\RMod$ that are induced by the separations of the $\SE$-connection $\Phi_{\SE}((\X, \Y))=(\X*\SE, \SE*\Y)$. 

\begin{corollary}[Corollaries \ref{corollary-left-mutation}, \ref{corollary-right-mutation} and \ref{corollary-middle-mutation}]\label{main-corollary}\setlength{\leftmargini}{18pt} 
Let $\U$ and $\V$ be subcategories of $\RMod$ that are contained in a Serre subcategory $\SE$ of $\RMod$. 
For a torsion theory $(\X, \Y)$ in $\RMod$, the following statements $(1) \sh (3)$ hold. 
\begin{enumerate}
\item[(1)] The following conditions $\mathrm{(a)}$ and $\mathrm{(b)}$ are equivalent. 
\begin{enumerate}
\item[(a)] The pair $\Psi^{CL}_{\SE}((\X, \Y))=((\X*\SE)\cap{}^{\perp}\SE, \SE*\Y)$ is a torsion theory in $\RMod$. 

\item[(b)] The heart $\SE$ of the $\SE$-connection $\Phi_{\SE}((\X, \Y))=(\X*\SE, \SE*\Y)$ is left canonical. 
\end{enumerate}

%-----
\item[(2)] The following conditions $\mathrm{(c)}$ and $\mathrm{(d)}$ are equivalent. 
\begin{enumerate}
\item[(c)] The pair $\Psi^{CR}_{\SE}((\X, \Y))=(\X*\SE, \SE^{\perp} \cap (\SE*\Y))$ is a torsion theory in $\RMod$. 

\item[(d)] The heart $\SE$ of the $\SE$-connection $\Phi_{\SE}((\X, \Y))=(\X*\SE, \SE*\Y)$ is right canonical. 
\end{enumerate}

%-----
\item[(3)] The following conditions $\mathrm{(e)}$ and $\mathrm{(f)}$ are equivalent. 
\begin{enumerate}
\item[(e)] The pair $\Psi^{CM}_{(\SE, \U, \V)}((\X, \Y))= \left(\, ( (\X*\SE) \cap {}^{\perp} \SE )*\U, \V*(\SE^{\perp} \cap (\SE*\Y) ) \, \right)$ is a torsion theory in $\RMod$.  

\item[(f)] The following two conditions $\mathrm{(i)}$ and $\mathrm{(ii})$ are satisfied: 
\begin{enumerate}
\item[(i)] The heart $\SE$ of the $\SE$-connection $\Phi_{\SE}((\X, \Y))=(\X*\SE, \SE*\Y)$ is left canonical and right canonical. 

\item[(ii)] One has the equalities $\Hom_{R}(\U, \V)=0$ and $\U*\V=\SE$. 
\end{enumerate}
\end{enumerate}

\end{enumerate}
\end{corollary}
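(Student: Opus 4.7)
The plan is to deduce this corollary directly from Theorem~\ref{main-theorem} by applying it to the $\SE$-torsion theory $(\T, \F) = (\X*\SE, \SE*\Y)$, which is the $\SE$-connection of $(\X, \Y)$ and is known to be an $\SE$-torsion theory with heart $\SE$ by \cite[Proposition 5.1]{Y-2024}. Under this identification the three pairs $\Psi^{CL}_{\SE}((\X, \Y))$, $\Psi^{CR}_{\SE}((\X, \Y))$, and $\Psi^{CM}_{(\SE, \U, \V)}((\X, \Y))$ appearing in the corollary coincide verbatim with the left, right and middle $\SE$-separations of $(\T, \F)$ from the theorem, so the mechanism of proof is essentially bookkeeping.

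The key step, which I would carry out first, is to show that the $\SE$-connection is automatically canonical, i.e.\ that $(\X*\SE)*(\SE*\Y) = \RMod$. This is the feature that makes the ordinary-to-ordinary situation strictly simpler than the general $\SE$-torsion theory setup. I expect it to follow from the chain
\[
\RMod \;=\; \X*\Y \;\subseteq\; (\X*\SE)*(\SE*\Y) \;\subseteq\; \RMod,
\]
where the first equality uses that $(\X,\Y)$ is an ordinary torsion theory in $\RMod$, and the middle inclusion uses $0 \in \SE$, which yields $\X \subseteq \X*\SE$ and $\Y \subseteq \SE*\Y$. With this in hand, the canonicity hypothesis on $(\T, \F)$ in Theorem~\ref{main-theorem}(1)(b), (2)(d), and (3)(f)(i) is satisfied for free and may be removed from the list of conditions.

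Once canonicity has been absorbed, part (1) of the corollary should read off immediately from Theorem~\ref{main-theorem}(1): the only nontrivial remaining content is the left canonicity of the heart $\SE$ of $(\X*\SE, \SE*\Y)$. Part (2) follows identically from Theorem~\ref{main-theorem}(2), replacing left by right. For part (3), I would match Theorem~\ref{main-theorem}(3)(f)(ii) with condition (3)(f)(i) of the corollary and Theorem~\ref{main-theorem}(3)(f)(iii) with condition (3)(f)(ii) of the corollary, while (3)(f)(i) of the theorem is automatic by the first step.

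I do not expect a genuine obstacle, since nearly all of the analytic content is delegated to Theorem~\ref{main-theorem}; the one point that deserves a brief sanity check is confirming that the three pairs in the corollary are literally the separations of the $\SE$-connection with respect to the same Serre subcategory $\SE$, so that Theorem~\ref{main-theorem} applies. Because the definitions of $\Psi^{L}_{\SE}$, $\Psi^{R}_{\SE}$, and $\Psi^{M}_{(\SE, \U, \V)}$ match those of $\Psi^{CL}_{\SE}$, $\Psi^{CR}_{\SE}$, and $\Psi^{CM}_{(\SE, \U, \V)}$ term by term after substituting $\T = \X*\SE$ and $\F = \SE*\Y$, this verification is a one-line check.
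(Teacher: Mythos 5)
Your proposal is correct and follows essentially the same route as the paper: the paper likewise reduces all three parts to Theorem~\ref{main-theorem} via the substitution $(\T, \F) = (\X*\SE, \SE*\Y)$, invokes \cite[Proposition 5.1]{Y-2024} to see this pair is an $\SE$-torsion theory, and discharges the canonicity hypothesis through the same chain $\RMod = \X*\Y \subseteq (\X*\SE)*(\SE*\Y) \subseteq \RMod$. The only cosmetic difference is that the paper splits the statement into three separate corollaries with the second and third simply referring back to the first's argument, whereas you treat all three at once.
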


%30
%========================================
%= Organization
%========================================
This paper is organized as follows. 
Section \ref{Preliminaries} prepares notations, definitions, and fundamental facts. 
In particular, this section provides definitions of generalized torsion theory associated with a Serre subcategory and mutations of torsion theories; see Definitions \ref{Heart-torsion-theory} and \ref{definition-mutations}. 
Section \ref{section-left-right-mutation} shows Theorem \ref{main-theorem} (1), (2) and Corollary \ref{main-corollary} (1), (2); see Theorems \ref{left-mutation}, \ref{right-mutation}, and Corollaries \ref{corollary-left-mutation}, \ref{corollary-right-mutation}, respectively. 
Section \ref{section-middle-mutation} establishes Theorem \ref{main-theorem} (3) and Corollary \ref{main-corollary} (3); see Theorem \ref{middle-mutation} and Corollary \ref{corollary-middle-mutation}. 
We dedicate Section \ref{section-example} to provide an example of mutations of ordinary and generalized torsion theories; see Example \ref{example-mutations}.

%60
%==================================================================
%=
%=
%=   section : Preliminaries
%=
%=
%==================================================================
\section{Preliminaries}\label{Preliminaries}
Throughout this paper, all rings are commutative Noetherian, and all modules are unitary. 
For a ring $R$, we denote by $\RMod$ the category of $R$-modules. 
Additionally, we suppose that all subcategories of $\RMod$ are strictly full and contain the zero module. 
For convenience, we will refer to subcategories of $\RMod$ simply as {\it subcategories}.

%10
%========================================
%=  the definitions of Serre subcategory, extension subcategory, 
%=                          orthogonal subcategory, torsion theory
%========================================
First of all, we recall the concept of torsion theory introduced by Dickson \cite{D-1966} and some notions about subcategories. 

\begin{definition}\setlength{\leftmargini}{18pt}
\begin{enumerate}
\item A {\it torsion theory} in $\RMod$ is a pair $(\X, \Y)$ of subcategories satisfying the following three conditions: 
\begin{enumerate}
\item One has $\Hom_{R}(X, Y)=0$ for all $X \in \X$ and all $Y \in \Y$. 

%---
\item If an $R$-module $M$ satisfies $\Hom_{R}(M, Y)=0$ for all $Y \in \Y$, then the module $M$ lies in $\X$. 

%---
\item If an $R$-module $M$ satisfies $\Hom_{R}(X, M)=0$ for all $X \in \X$, then the module $M$ lies in $\Y$. 
\end{enumerate}
Additionally, the part $\X$ (respectively, $\Y$) is called a {\it torsion part} (respectively, {\it torsion-free part}). 

%-----
\item A {\it Serre subcategory} of $\RMod$ is a subcategory that is closed under submodules, quotient modules, and extension modules. 
{\it Unless otherwise stated, the symbol $\lq \lq \SE$" will always mean a Serre subcategory of $\RMod$.} 

%-----
\item We define the {\it extension subcategory} $\C*\D$ for the subcategories $\C$ and $\D$ as the subcategory consisting of the $R$-modules $M$ with a short exact sequence $0 \to C \to M \to D \to 0$ such that $C \in \C$ and $D \in \D$. 

%-----
\item Let $\C$ be a subcategory. 
The {\it left orthogonal subcategory} ${}^{\perp} \C$ (respectively, the {\it right orthogonal subcategory} $\C^{\perp}$) is the subcategory consisting of the $R$-modules $M$ with $\Hom_{R}(M, C)=0$ (respectively, $\Hom_{R}(C, M)=0$) for all $C \in \C$. 
\end{enumerate}
\end{definition}

%10
%========================================
%=  Remark of the canonical short exact sequence
%========================================
\begin{remark}\label{basic-remark-ordinal}\setlength{\leftmargini}{18pt}
\cite[Page 157, Exercise 8]{S-1975} (See also \cite[Chapter VI, Propositions 2.1 and 2.2]{S-1975}) states that a pair $(\X, \Y)$ of subcategories is a torsion theory in $\RMod$ if and only if it satisfies the following three conditions: 
\begin{enumerate}
\item One has $\X \cap \Y=\{0\}$.

\item The subcategory $\X$ is closed under quotient modules, and the subcategory $\Y$ is closed under submodules. 

\item One has $\X*\Y=\RMod$. 
In other words, each $R$-module $M$ has the {\it canonical short exact sequence} $0 \to X \to M \to Y \to 0$ with $X \in \X$ and $Y \in \Y$. 
\end{enumerate}
\end{remark}

%30
%========================================
%=  the definitions of torsion theory connected by a Serre subcategory
%=======================================
Next, in analogy with the definition of ordinary torsion theory, the paper \cite{Y-2020} defined a generalized torsion theory associated with a Serre subcategory. (See also \cite{Y-2024}.) 

\begin{definition}\setlength{\leftmargini}{18pt}\label{Heart-torsion-theory}
Let $\SE$ be a Serre subcategory of $\RMod$. 
\begin{enumerate}
\item A {\it torsion theory connected by $\SE$} in $\RMod$, which will be simply called an {\it $\SE$-torsion theory}, is a pair $(\T, \F)$ of subcategories satisfying the following three conditions:
\begin{enumerate}\setlength{\leftskip}{4mm}
\item[(TT1)]\,  If $T \in \T$, $F \in \F$ and $\varphi \in \Hom_{R}(T, F)$, then the module $\varphi(T)$ lies in $\SE$. 

%------
\item[(TT2)] If an $R$-module $M$ satisfies $\varphi(M) \in \SE$ for all $F \in \F$ and all $\varphi \in \Hom_{R}(M, F)$, then the module $M$ lies in $\T$. 

%------
\item[(TT3)] If an $R$-module $M$ satisfies $\varphi(T) \in \SE$ for all $T \in \T$ and all $\varphi \in \Hom_{R}(T, M)$, then the module $M$ lies in $\F$. 
\end{enumerate} 
%-----
\item Let $(\T, \F)$ be an $\SE$-torsion theory in $\RMod$. 
We refer to $\T$ (respectively, $\F$) as an {\it $\SE$-torsion part} (respectively, {\it $\SE$-torsion-free part}). 
Furthermore, the subcategory $\SE$ is called the {\it heart} of the pair $(\T, \F)$. 
\end{enumerate}
\end{definition}

%10
%========================================
%= basic properties
%========================================
The following lemma summarizes properties related to $\SE$-torsion theory that will be used frequently in this paper. 

\begin{lemma}\label{basic-lemma}\setlength{\leftmargini}{18pt}
Let $(\T, \F)$ be an $\SE$-torsion theory. 
Then the following statements hold. 
\begin{enumerate}
\item $\T$ has the property of right $\SE$-extension closed, namely one has the equality $\T=\T*\SE$. 

\item $\F$ has the property of left $\SE$-extension closed, namely one has the equality $\F=\SE*\F$. 

\item One has the equality $\T \cap \F=\SE$. 
In particular, there are the inclusion relations $\T \supseteq \SE$ , $\F \supseteq \SE$, and the equalities $\T \cap \SE=\SE=\SE \cap \F$. 

\item One has $(\T \cap {}^{\perp} \SE)*(\T \cap \SE) \subseteq \T *\SE =\T$ and $(\SE \cap \F)*(\SE^{\perp} \cap \F) \subseteq \SE*\F=\F$. %(¦ not need $\T, \F$  ext-closed)
\end{enumerate}
\end{lemma}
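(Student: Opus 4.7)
The plan is to treat the four assertions in order, relying mainly on the axioms (TT1)--(TT3) together with the closure of $\SE$ under submodules, quotients, and extensions.

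For (1), the inclusion $\T\subseteq\T*\SE$ is free by extending by the zero module. For the reverse, I would start from a short exact sequence $0\to T\to M\to S\to 0$ with $T\in\T$ and $S\in\SE$ and verify the hypothesis of (TT2): given any $F\in\F$ and any $\varphi\in\Hom_{R}(M,F)$, the restriction $\varphi|_{T}$ yields $\varphi(T)\in\SE$ by (TT1), while $\varphi(M)/\varphi(T)$ is a quotient of $M/T\cong S$, hence in $\SE$ by quotient closure. Extension closure then places $\varphi(M)$ in $\SE$, so (TT2) gives $M\in\T$. Statement (2) is entirely dual: from $0\to S\to M\to F\to 0$ with $S\in\SE$ and $F\in\F$, I would check (TT3) for a morphism $\varphi:T\to M$ by decomposing $\varphi(T)$ through the submodule $\varphi(T)\cap S$, which lies in $\SE$, and the quotient $\varphi(T)/(\varphi(T)\cap S)\cong(\varphi(T)+S)/S$, which is the image of the composite $T\to M\to F$ and hence lies in $\SE$ by (TT1); extension closure then recovers $\varphi(T)\in\SE$.

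For (3), I would first prove $\SE\subseteq\T$ via (TT2): any map from $S\in\SE$ to some $F\in\F$ has image a quotient of $S$, still in $\SE$. The dual argument using (TT3) gives $\SE\subseteq\F$, hence $\SE\subseteq\T\cap\F$. Conversely, for $M\in\T\cap\F$, applying (TT1) to the identity morphism $\mathrm{id}_{M}:M\to M$ shows $M\in\SE$. The inclusions $\T\supseteq\SE$, $\F\supseteq\SE$ and the equalities $\T\cap\SE=\SE=\SE\cap\F$ asserted in the ``in particular'' clause are then immediate. Assertion (4) is a one-line consequence of the preceding items: the inclusions $\T\cap{}^{\perp}\SE\subseteq\T$ and $\T\cap\SE\subseteq\SE$ combined with the evident monotonicity of the extension operation $*$ in each variable give $(\T\cap{}^{\perp}\SE)*(\T\cap\SE)\subseteq\T*\SE$, which equals $\T$ by (1); the symmetric inclusion $(\SE\cap\F)*(\SE^{\perp}\cap\F)\subseteq\SE*\F=\F$ follows the same way using (2).

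None of these steps poses a real obstacle. The only point requiring any care is the image manipulation in (1) and (2), where one has to identify the correct subquotient of $\varphi(M)$, respectively $\varphi(T)$, so that the three Serre closures of $\SE$ can be applied in the right order. The routine verification that $*$ is monotone in each argument, and that a surjection $M\twoheadrightarrow\varphi(M)/\varphi(T)$ factors through $M/T$, may safely be left implicit.
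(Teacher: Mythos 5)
Your argument is correct. Note, though, that the paper does not actually prove (1)--(3) here: it simply cites [Y-2022, Lemma 3.3] for (1) and (2) and [Y-2020, Proposition 3.4] for (3), and deduces (4) from (1) and (2) exactly as you do. So your proposal supplies a self-contained verification where the paper outsources the work. The verification itself is sound: for (1) the factorization of $M\twoheadrightarrow\varphi(M)/\varphi(T)$ through $M/T\cong S$ is exactly what makes the quotient-closure of $\SE$ applicable, and the extension $0\to\varphi(T)\to\varphi(M)\to\varphi(M)/\varphi(T)\to 0$ finishes (TT2); for (2) the isomorphism $\varphi(T)/(\varphi(T)\cap S)\cong(\varphi(T)+S)/S$ correctly identifies the second piece as an image of a map $T\to F$, so (TT1) applies; for (3) the use of $\mathrm{id}_M$ for $M\in\T\cap\F$ and of quotient/submodule closure for the two inclusions $\SE\subseteq\T$ and $\SE\subseteq\F$ is the standard argument. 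No gaps.
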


\begin{proof}
\cite[Lemma 3.3]{Y-2022} implies statements (1) and (2). 
Statement (3) follows from \cite[Proposition 3.4]{Y-2020}. 
Statement (4) obviously holds by statements (1) and (2). 
\end{proof}

%30
%========================================
%=  the definition of canonical
%========================================
Ordinary torsion theories provide canonical short exact sequences for each $R$-module by Remark \ref{basic-remark-ordinal} (3). 
However, we do not assume that generalized torsion theories will always give ones. 
Therefore, as an analogy to the fact that ordinary torsion theories give such sequences, it is natural to define the notion of canonical for generalized torsion theories as follows. 

\begin{definition}\setlength{\leftmargini}{18pt}
Let $(\T, \F)$ be an $\SE$-torsion theory. 
\begin{enumerate}
\item The pair $(\T, \F)$ is called {\it canonical} if one has $\T*\F=\RMod$, namely, any $R$-module $M$ has a short exact sequence $0 \to T \to M \to F \to 0$ with $T \in \T$ and $F \in \F$. 

\item The heart $\SE$ is called {\it left canonical} for the pair $(\T, \F)$ if one has $(\T \cap {}^{\perp} \SE)*(\T \cap \SE)=\T$, namely, any $R$-module $T \in \T$ has a short exact sequence $0 \to X \to T \to Y \to 0$ with $X \in \T \cap {}^{\perp} \SE$ and $Y \in \T \cap \SE$. 

\item The heart $\SE$ is called {\it right canonical} for the pair $(\T, \F)$ if one has $(\SE \cap \F)*(\SE^{\perp} \cap \F)=\F$, namely, any $R$-module $F \in \F$ has a short exact sequence $0 \to X \to F \to Y \to 0$ with $X \in \SE \cap \F$ and $Y \in \SE^{\perp} \cap \F$. 
\end{enumerate}
\end{definition}

%30
%========================================
%=  the definition of mutations
%========================================
Finally, we define the mutations of ordinary and generalized torsion theories. 
\cite[Proposition 5.1]{Y-2024} states that an ordinary torsion theory $(\X, \Y)$ in $\RMod$ always gives the $\SE$-torsion theory $(\X*\SE, \SE*\Y)$, which will be called the $\SE$-connection in the definition below. 
The purpose of this paper is to investigate when $\SE$-torsion theories provide ordinary torsion theories and when ordinary ones provide each other, as the following mutations. 

\begin{definition}\setlength{\leftmargini}{18pt}\label{definition-mutations}
Let $(\T, \F)$ be an $\SE$-torsion theory, $(\X, \Y)$ be a torsion theory in $\RMod$, and $(\U, \V)$ be a pair of subcategories that are contained in the heart $\SE$. 
\begin{enumerate}
\item The $\SE$-torsion theory $\Phi_{\SE}((\X, \Y))=(\X*\SE, \SE*\Y)$ is called the {\it $\SE$-connection} of $(\X, \Y)$. 

\item We denote by $\Psi^{L}_{\SE}((\T, \F))=(\T \cap {}^{\perp} \SE, \F)$ the {\it left $\SE$-separation} of $(\T, \F)$. 
We say that $\Psi^{L}_{\SE}((\T, \F))$ is the {\it left mutation at the heart $\SE$} of $(\T, \F)$ if the pair $(\T \cap {}^{\perp} \SE, \F)$ is a torsion theory in $\RMod$. 

\item We denote by $\Psi^{R}_{\SE}((\T, \F))=(\T, \SE^{\perp} \cap \F)$ the {\it right $\SE$-separation} of $(\T, \F)$. 
We say that $\Psi^{R}_{\SE}((\T, \F))$ is the {\it right mutation at the heart $\SE$} of $(\T, \F)$ if the pair $(\T, \SE^{\perp} \cap \F)$ is a torsion theory in $\RMod$. 

\item We denote by $\Psi^{M}_{(\SE, \U, \V)}((\T, \F))=( (\T \cap {}^{\perp} \SE)*\U, \V*(\SE^{\perp} \cap \F))$ the {\it middle $(\SE, \U, \V)$-separation} of $(\T, \F)$. 
We say that $\Psi^{M}_{(\SE, \U, \V)}((\T, \F))$ is the {\it middle mutation at the triple $(\SE, \U, \V)$} of $(\T, \F)$ if the pair $((\T \cap {}^{\perp} \SE)*\U, \V*(\SE^{\perp} \cap \F))$ is a torsion theory in $\RMod$. 

\item The pairs $\Psi^{CL}_{\SE}((\X, \Y))=(\Psi^{L}_{\SE} \circ \Phi_{\SE})((\X, \Y))$, $\Psi^{CR}_{\SE}((\X, \Y))=(\Psi^{R}_{\SE} \circ \Phi_{\SE})((\X, \Y))$, and $\Psi^{CM}_{(\SE, \U, \V)}((\X, \Y))=(\Psi^{M}_{(\SE, \U, \V)} \circ \Phi_{\SE})((\X, \Y))$ are respectively called the {\it left $\SE$-mutation}, the {\it right $\SE$-mutation}, and the {\it middle $(\SE, \U, \V)$-mutation} of $(\X, \Y)$ if each pair is a torsion theory in $\RMod$. 
\end{enumerate}
\end{definition}

%10
\begin{remark}\label{remark-mutations}
Let $(\T, \F)$, $(\X, \Y)$, and $(\U, \V)$ be as in Definition \ref{definition-mutations}. 
When we investigate mutations, we are interested in the case when the heart $\SE$ is not the zero subcategory, in other words, when the heart $\SE$ has a non-zero module.
Indeed, if we suppose that the heart $\SE=0$, then we have the equalities ${}^{\perp} \SE=\RMod$, $\SE^{\perp}=\RMod$, $\U=0$, and $\V=0$. 
Therefore, we have the equalities $\Phi_{\SE}((\X, \Y))=(\X, \Y)$, $\Psi^{L}_{\SE}((\T, \F))=\Psi^{R}_{\SE}((\T, \F))=\Psi^{M}_{(\SE, \U, \V)}((\T, \F))=(\T, \F)$. 
\end{remark}

%60
%========================================
%= 
%= left mutations and right mutations
%=
%========================================
\section{The left mutation and the right mutation}\label{section-left-right-mutation}
In this section we investigate when an $\SE$-torsion theory has the left mutation or right mutation at the heart $\SE$.

%10
%========================================
%= a necessary and sufficient condition of left mutation
%========================================
We begin with a necessary and sufficient condition for an $\SE$-torsion theory to have the left mutation at the heart $\SE$. 

\begin{theorem}\setlength{\leftmargini}{18pt}\label{left-mutation}
Let $(\T, \F)$ be an $\SE$-torsion theory. 
Then the following conditions are equivalent. 
\begin{enumerate}
\item The left $\SE$-separation $\Psi^{L}_{\SE}((\T, \F))=(\T \cap {}^{\perp} \SE, \F)$ is a torsion theory in $\RMod$. 

\item The pair $(\T, \F)$ is canonical and the heart $\SE$ is left canonical. 
\end{enumerate}
\end{theorem}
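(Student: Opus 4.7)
The plan is to use the equivalent characterization of ordinary torsion theories recorded in Remark \ref{basic-remark-ordinal}: a pair $(\X, \Y)$ of subcategories is a torsion theory in $\RMod$ precisely when $\X \cap \Y = 0$, $\X$ is closed under quotients, $\Y$ is closed under submodules, and $\X * \Y = \RMod$. As a preparatory step I would establish two closure properties that follow directly from the $\SE$-torsion axioms: the part $\T$ is closed under quotients (compose the morphism from the quotient to any $F \in \F$ with the surjection and apply (TT2)) and the part $\F$ is closed under submodules (compose a morphism from any $T \in \T$ into the submodule with the inclusion and apply (TT3)). The former immediately yields that $\T \cap {}^{\perp} \SE$ is also closed under quotients, since ${}^{\perp} \SE$ is trivially closed under quotients.

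For the direction (2) $\Rightarrow$ (1), the intersection condition $(\T \cap {}^{\perp} \SE) \cap \F = 0$ follows from Lemma \ref{basic-lemma} (3): any module in the intersection lies in $\SE \cap {}^{\perp} \SE$, so its identity morphism is zero and the module vanishes. The decisive step is to verify $(\T \cap {}^{\perp} \SE) * \F = \RMod$, which I would obtain by chaining canonicity of $(\T, \F)$, left canonicity of $\SE$, and parts (2), (3) of Lemma \ref{basic-lemma}:
\[
\RMod = \T * \F = \bigl((\T \cap {}^{\perp} \SE) * (\T \cap \SE)\bigr) * \F = (\T \cap {}^{\perp} \SE) * (\SE * \F) = (\T \cap {}^{\perp} \SE) * \F.
\]
The middle equality is associativity of the extension operation, which I would justify by the standard splicing of short exact sequences.

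For the converse (1) $\Rightarrow$ (2), canonicity of $(\T, \F)$ is immediate from $(\T \cap {}^{\perp} \SE) * \F = \RMod$ and the inclusion $\T \cap {}^{\perp} \SE \subseteq \T$, by monotonicity of the extension operation. For left canonicity, Lemma \ref{basic-lemma} (4) already furnishes the inclusion $(\T \cap {}^{\perp} \SE) * (\T \cap \SE) \subseteq \T$; for the reverse inclusion, given $T \in \T$, I would use $(\T \cap {}^{\perp} \SE) * \F = \RMod$ to produce a short exact sequence $0 \to X \to T \to F \to 0$ with $X \in \T \cap {}^{\perp} \SE$ and $F \in \F$, then observe that $F$ is a quotient of $T \in \T$ and hence lies in $\T \cap \F = \SE$, so in $\T \cap \SE$.

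I expect the main delicate point to be the associativity step in the chain above, which is not formulated as a named lemma in the preliminaries but follows routinely from splicing short exact sequences in the abelian category $\RMod$. The remainder of the argument is essentially a bookkeeping translation once the closure properties implicit in the $\SE$-torsion axioms are isolated.
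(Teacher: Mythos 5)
Your proof is correct. For the direction $(1) \Rightarrow (2)$, your argument is essentially identical to the paper's, except that you deduce $F \in \SE$ from closure of $\T$ under quotients and $\T \cap \F = \SE$, whereas the paper applies condition (TT1) directly; these are interchangeable.

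For $(2) \Rightarrow (1)$, your route is genuinely different from the paper's and, I think, cleaner. The paper verifies the three defining conditions of a torsion theory by hand: it shows $\Hom_R(\T \cap {}^{\perp}\SE, \F)=0$ by factoring each map through its image (which lies in $\SE$ by (TT1)) and invoking $\Hom_R({}^{\perp}\SE, \SE)=0$; it shows modules orthogonal to $\F$ land in $\T \cap {}^{\perp}\SE$ via (TT2) and $\SE \subseteq \F$; and it shows modules orthogonal to $\T \cap {}^{\perp}\SE$ land in $\F$ via a two-step decomposition (canonicity then left canonicity) and a vanishing argument. You instead work through the characterization in Remark \ref{basic-remark-ordinal}: after recording the closure properties and the intersection condition (both of which you justify correctly), you collapse the entire verification that $(\T \cap {}^{\perp}\SE) * \F = \RMod$ into the one-line calculation
\[
\RMod = \T * \F = \bigl((\T \cap {}^{\perp} \SE) * (\T \cap \SE)\bigr) * \F = (\T \cap {}^{\perp} \SE) * (\SE * \F) = (\T \cap {}^{\perp} \SE) * \F,
\]
using associativity of $*$, $\T \cap \SE = \SE$, and $\SE * \F = \F$. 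What your approach buys is that the crux of the argument becomes a formal computation with extension subcategories rather than a sequence of Hom-vanishing diagram chases; what the paper's approach buys is that it does not rely on associativity of $*$ or on the alternate characterization of torsion theories as its primary tool, keeping the argument closer to the bare axioms (TT1)--(TT3). Your identification of the associativity step as the one place needing an extra observation is accurate; it is standard (splice the two short exact sequences and take a quotient) and would be unobjectionable to leave as such.
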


\begin{proof}
$(2) \Rightarrow (1)$: 
%(TT1)-----
We start by showing the equality $\Hom_{R}(\T \cap {}^{\perp} \SE, \F)=0$. 
Let us take $T \in \T \cap {}^{\perp} \SE$, $F \in \F$, and $\varphi \in \Hom_{R}(T, F)$. 
Then we will see the equality $\varphi=0$. 

Since the map $\varphi$ belongs to $\Hom_{R}(\T \cap {}^{\perp} \SE, \F) \subseteq \Hom_{R}(\T, \F)$, the module $\varphi(T)$ lies in $\SE$ by condition (TT1) for the pair $(\T, \F)$. 
We now decompose the map $\varphi$ into the following form 
\[ \begin{CD} T @>{\psi}>> \varphi(T) @>{i}>> F, \end{CD}\] 
where $i$ is the inclusion map. 
Then this map $\psi$ belongs to $\Hom_{R}(\T \cap {}^{\perp} \SE, \SE) \subseteq \Hom_{R}({}^{\perp} \SE, \SE)=0$. 
We thus achieve the equalities $\varphi= i \circ \psi=0$.

%(TT2)-----
Secondly, we suppose that an $R$-module $M$ satisfies $\Hom_{R}(M, \F)=0$. 
Then we will verify that the module $M$ lies in $\T \cap {}^{\perp}\SE$. 
For each map $\varphi \in \Hom_{R}(M, F)$ with $F \in \F$, our assumption for the module $M$ yields the equality $\varphi=0$, and we thus obtain that $\varphi(M)=0 \in \SE$. 
Therefore, condition (TT2) for the pair $(\T, \F)$ implies that the module $M$ lies in $\T$.
On the other hand, Lemma \ref{basic-lemma} (3) and the assumption for the module $M$ deduce that $\Hom_{R}(M, \SE) \subseteq \Hom_{R}(M, \F)=0$. 
Therefore, the module $M$ also lies in ${}^{\perp}\SE$.

%(TT3)-----
Finally, we take an $R$-module $M$ with $\Hom_{R}(\T \cap {}^{\perp}\SE, M)=0$, and let us show that the module $M$ lies in $\F$. 
Assumption (2) gives the equalities $\T*\F=\RMod$ and $(\T \cap {}^{\perp} \SE) *(\T \cap \SE)=\T$. 
Thus, the module $M$ has a short exact sequence $0 \to T \overset{\varphi}{\to} M \to F \to 0$ with $T \in \T$ and $F \in \F$. 
Furthermore, the module $T$ gives a short exact sequence $0 \to T_{1} \overset{\psi}{\to} T \to T_{2} \to 0$ with $T_{1} \in \T \cap {}^{\perp} \SE$ and $T_{2} \in \T \cap \SE$. 

We now note that the composition $\varphi \circ \psi$ belongs to $\Hom_{R}(\T \cap {}^{\perp} \SE, M)$. 
Then the assumption for the module $M$ implies that $\varphi \circ \psi=0$, and thus the equality $(\varphi \circ \psi)(T_{1})=0$ holds. 
The injectivity of the maps $\varphi$ and $\psi$ yields the equality $T_{1}=0$. 
Therefore, the second short exact sequence provides an isomorphism $T \cong T_{2} \in \T \cap \SE \subseteq \SE$. 
Consequently, the first short exact sequence and Lemma \ref{basic-lemma} (2) deduce that the module $M$ lies in $\SE*\F=\F$.

%5
%-----
$(1) \Rightarrow (2)$: 
By Remark \ref{basic-remark-ordinal} (3), the torsion theory $(\T \cap {}^{\perp} \SE, \F)$ provides that $\RMod =(\T \cap {}^{\perp} \SE)*\F \subseteq \T*\F \subseteq \RMod$. 
This implies the equality $\T*\F=\RMod$. 
In other words, the pair $(\T, \F)$ is canonical. 

Next, to prove that the heart $\SE$ is left canonical, we only have to show the relation $ (\T \cap {}^{\perp} \SE)*(\T \cap \SE) \supseteq \T$ by Lemma \ref{basic-lemma} (4). 
 Let us take an $R$-module $T \in \T$. 
Remark \ref{basic-remark-ordinal} (3) implies that the torsion theory $(\T \cap {}^{\perp} \SE, \F)$ satisfies the equality $(\T \cap {}^{\perp} \SE)*\F=\RMod$. 
Then there exists a short exact sequence $0 \to T' \to T \overset{\varphi}{\to} F \to 0$ with $T' \in \T \cap {}^{\perp} \SE$ and $F \in \F$. 
Since the map $\varphi$ belongs to $\Hom_{R}(\T, \F)$, the module $F=\varphi(T)$ lies in $\SE=\T \cap \SE$ by condition (TT1) for the pair $(\T, \F)$ and Lemma \ref{basic-lemma} (3). 
Therefore, the above short exact sequence implies that the module $T$ lies in $(\T \cap {}^{\perp} \SE)*(\T \cap \SE)$. 
This completes the proof of our claim. 
\end{proof}

%10
%========================================
%= corollary of the left mutation
%========================================
A consequence of Theorem \ref{left-mutation} characterizes when a torsion theory $(\X, \Y)$ in $\RMod$ has the left $\SE$-mutation $\Psi^{CL}_{\SE}((\X, \Y))$, that is, when the pair $\Psi^{CL}_{\SE}((\X, \Y))$ is a torsion theory in $\RMod$. 

\begin{corollary}\setlength{\leftmargini}{18pt}\label{corollary-left-mutation}
Let $(\X, \Y)$ be a torsion theory in $\RMod$. 
For a Serre subcategory $\SE$, the following conditions are equivalent. 
\begin{enumerate}
\item The pair $(\X, \Y)$ has the left $\SE$-mutation 
\[ \Psi^{CL}_{\SE}((\X, \Y))=((\X*\SE)\cap{}^{\perp}\SE, \SE*\Y).\] 

\item The heart $\SE$ of the $\SE$-connection $\Phi_{\SE}((\X, \Y))$ is left canonical. 
\end{enumerate}
\end{corollary}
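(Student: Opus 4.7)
The plan is to reduce the corollary directly to Theorem \ref{left-mutation} applied to the $\SE$-torsion theory $\Phi_{\SE}((\X, \Y)) = (\X*\SE, \SE*\Y)$. By Definition \ref{definition-mutations} (5), the pair $\Psi^{CL}_{\SE}((\X, \Y))$ is exactly $\Psi^{L}_{\SE}(\Phi_{\SE}((\X, \Y)))$, and \cite[Proposition 5.1]{Y-2024} guarantees that $\Phi_{\SE}((\X, \Y))$ is an $\SE$-torsion theory, so Theorem \ref{left-mutation} applies. That theorem states condition (1) is equivalent to the conjunction of (i) the pair $(\X*\SE, \SE*\Y)$ being canonical, and (ii) the heart $\SE$ being left canonical for $(\X*\SE, \SE*\Y)$, which is precisely condition (2) of the corollary.

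The remaining task, therefore, is to observe that for the pair $\Phi_{\SE}((\X, \Y))$, canonicality comes for free from the fact that $(\X, \Y)$ was already an ordinary torsion theory. Concretely, I would argue as follows: since every subcategory in this paper contains the zero module, the trivial short exact sequence $0 \to X \to X \to 0 \to 0$ shows $\X \subseteq \X*\SE$, and similarly $\Y \subseteq \SE*\Y$. Combining these inclusions with Remark \ref{basic-remark-ordinal} (3) applied to the torsion theory $(\X, \Y)$, we obtain
\[
\RMod \;=\; \X*\Y \;\subseteq\; (\X*\SE)*(\SE*\Y) \;\subseteq\; \RMod,
\]
so $(\X*\SE)*(\SE*\Y) = \RMod$, that is, $\Phi_{\SE}((\X, \Y))$ is canonical.

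With this observation in hand, the two conditions in Theorem \ref{left-mutation} collapse to the single remaining one, namely that the heart $\SE$ be left canonical for $\Phi_{\SE}((\X, \Y))$, which is exactly condition (2) of the corollary. There is no genuine obstacle here: the entire content is packaged by Theorem \ref{left-mutation}, and the only point requiring a line of justification is that the canonicality hypothesis of the theorem is automatically satisfied by an $\SE$-connection of an ordinary torsion theory.
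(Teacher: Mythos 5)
Your proposal is correct and follows essentially the same route as the paper: both reduce the corollary to Theorem \ref{left-mutation} applied to $\Phi_{\SE}((\X, \Y))$, invoke \cite[Proposition 5.1]{Y-2024} to see that the $\SE$-connection is an $\SE$-torsion theory, and establish canonicality via the chain $\RMod = \X*\Y \subseteq (\X*\SE)*(\SE*\Y) \subseteq \RMod$. No gaps.
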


\begin{proof}
Let $\T=\X*\SE$ and $\F=\SE*\Y$. 
Then we have the equalities $\Psi^{CL}_{\SE}((\X, \Y))=(\Psi^{L}_{\SE} \circ \Phi_{\SE})((\X, \Y))=\Psi^{L}_{\SE}((\T, \F))$. 
If we can verify that the pair $(\T, \F)$ is a canonical $\SE$-torsion theory, then our statements hold by Theorem \ref{left-mutation}. 

\cite[Proposition 5.1]{Y-2024} deduces that the $\SE$-connection $\Phi_{\SE}((\X, \Y))=(\T, \F)$ is an $\SE$-torsion theory. 
Moreover, Remark \ref{basic-remark-ordinal} (3) implies that the torsion theory $(\X, \Y)$ yields the relations $\RMod =\X*\Y \subseteq (\X*\SE)*(\SE*\Y) \subseteq \RMod$. 
This means the equality $\T*\F=\RMod$. 
Consequently, the pair $(\T, \F)$ is canonical. 
\end{proof}

%30
%========================================
%= a necessary and sufficient condition of right mutation
%========================================
Next, we investigate when an $\SE$-torsion theory has the right mutation at the heart $\SE$.

\begin{theorem}\setlength{\leftmargini}{18pt}\label{right-mutation}
Let $(\T, \F)$ be an $\SE$-torsion theory. 
Then the following conditions are equivalent. 
\begin{enumerate}
\item The right $\SE$-separation $\Psi^{R}_{\SE}((\T,\F))=(\T, \SE^{\perp} \cap \F)$ is a torsion theory in $\RMod$. 

\item The pair $(\T, \F)$ is canonical and the heart $\SE$ is right canonical. 
\end{enumerate}
\end{theorem}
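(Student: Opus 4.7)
The plan is to transport the argument of Theorem \ref{left-mutation} across the natural left--right duality: the roles of $\T$ and $\F$ exchange, the vanishing $\Hom_{R}({}^{\perp}\SE, \SE)=0$ is replaced by $\Hom_{R}(\SE, \SE^{\perp})=0$, left canonicality of $\SE$ is replaced by right canonicality, and Lemma \ref{basic-lemma}(1) is replaced by Lemma \ref{basic-lemma}(2). The three conditions from Remark \ref{basic-remark-ordinal} could be used in place of Definition 2.2(1), but I find it more transparent to verify the axioms of Definition 2.2(1) directly, as in the proof of Theorem \ref{left-mutation}.

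For $(2)\Rightarrow (1)$, I would check three things. First, for $T\in\T$, $F'\in\SE^{\perp}\cap\F$, and $\varphi\in\Hom_{R}(T,F')$, factor $\varphi$ as $T\to\varphi(T)\hookrightarrow F'$; by (TT1) the image lies in $\SE$, so the inclusion belongs to $\Hom_{R}(\SE,\SE^{\perp})=0$, forcing $\varphi=0$. Second, if $M$ satisfies $\Hom_{R}(\T,M)=0$, then trivially $\varphi(T)=0\in\SE$ for every $\varphi\in\Hom_{R}(T,M)$ and every $T\in\T$, so (TT3) puts $M$ in $\F$, while $\Hom_{R}(\SE,M)\subseteq\Hom_{R}(\T,M)=0$ (using $\SE\subseteq\T$ from Lemma \ref{basic-lemma}(3)) puts $M$ in $\SE^{\perp}$. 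Third, and this is where both hypotheses enter, suppose $M$ satisfies $\Hom_{R}(M,\SE^{\perp}\cap\F)=0$. Canonicality of $(\T,\F)$ embeds $M$ into a short exact sequence $0\to T\to M\overset{\pi}{\to}F\to 0$ with $T\in\T$ and $F\in\F$; right canonicality of $\SE$ decomposes $F$ via $0\to F_{1}\to F\overset{\rho}{\to}F_{2}\to 0$ with $F_{1}\in\SE\cap\F$ and $F_{2}\in\SE^{\perp}\cap\F$. The composition $\rho\circ\pi$ lies in $\Hom_{R}(M,\SE^{\perp}\cap\F)=0$, so surjectivity of $\pi$ forces $\rho=0$, hence $F_{2}=0$ and $F=F_{1}\in\SE$. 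Then Lemma \ref{basic-lemma}(1) gives $M\in\T*\SE=\T$.

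For $(1)\Rightarrow (2)$, the canonicality of $(\T,\F)$ is immediate, since the torsion theory $(\T,\SE^{\perp}\cap\F)$ satisfies $\T*(\SE^{\perp}\cap\F)=\RMod$ by Remark \ref{basic-remark-ordinal}(3), forcing $\T*\F=\RMod$. For right canonicality of $\SE$, Lemma \ref{basic-lemma}(4) provides the inclusion $(\SE\cap\F)*(\SE^{\perp}\cap\F)\subseteq\F$, so I would only need the reverse. Given $F\in\F$, Remark \ref{basic-remark-ordinal}(3) applied to $(\T,\SE^{\perp}\cap\F)$ yields a short exact sequence $0\to T\overset{\iota}{\to}F\to F'\to 0$ with $T\in\T$ and $F'\in\SE^{\perp}\cap\F$. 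The injection $\iota$ lies in $\Hom_{R}(\T,\F)$, so (TT1) places $\iota(T)\cong T$ in $\SE$; combined with $T\in\T\cap\F=\SE$ via Lemma \ref{basic-lemma}(3), this exhibits $F\in(\SE\cap\F)*(\SE^{\perp}\cap\F)$.

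The main obstacle is the torsion-side verification in $(2)\Rightarrow (1)$, where the two canonical hypotheses must be combined in the right order: canonicality of $(\T,\F)$ alone does not suffice, because the torsion-free part $F$ in the canonical sequence need not lie in $\SE^{\perp}$, and right canonicality of $\SE$ is precisely what strips off the $\SE\cap\F$-piece so that $M$ can be promoted into $\T*\SE=\T$.
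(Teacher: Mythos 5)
Your proof is correct and follows essentially the same route as the paper: verify the axioms of Definition~\ref{Heart-torsion-theory}(1) directly, using (TT1), (TT3), Lemma~\ref{basic-lemma}, the $\Hom_{R}(\SE,\SE^{\perp})=0$ vanishing, and the two-step canonical/right-canonical decomposition for the torsion-side check. Your remarks about where the hypotheses enter and why both are needed match the structure of the paper's argument.
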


\begin{proof}
$(2) \Rightarrow (1)$: 
%(TT1)-----
First of all, let us show the equality $\Hom_{R}(\T, \SE^{\perp} \cap \F)=0$. 
Namely, we take a map $\varphi \in \Hom_{R}(T, F)$ with $T \in \T$ and $F \in \SE^{\perp}\cap \F$, and we will prove the equality $\varphi=0$. 

Since the map $\varphi$ belongs to $\Hom_{R}(\T, \SE^{\perp} \cap \F) \subseteq \Hom_{R}(\T, \F)$, the module $\varphi(T)$ lies in $\SE$ by condition (TT1) for the pair $(\T, \F)$. 
Now let us decompose the map $\varphi$ into the following form 
\[ \begin{CD} T @>{\psi}>> \varphi(T) @>{i}>> F, \end{CD} \]
where $i$ is the inclusion map.
Then the map $i$ belongs to $\Hom_{R}(\SE, \SE^{\perp} \cap \F) \subseteq \Hom_{R}(\SE, \SE^{\perp})=0$. 
Therefore, we obtain the equality $\varphi=i \circ \psi=0$.

%(TT2)-----
Secondly, we suppose that an $R$-module $M$ satisfies $\Hom_{R}(M, \SE^{\perp} \cap \F)=0$. 
Then we will see that the module $M$ lies in $\T$. 
Assumption (2) deduces that the equalities $\T*\F=\RMod$ and $(\SE \cap \F)*(\SE^{\perp} \cap \F)=\F$. 
We thus have a short exact sequence $0 \to T \to M \overset{\varphi}{\to} F \to 0$ with $T \in \T$ and $F \in \F$. 
Moreover, the module $F$ provides a short exact sequence $0 \to F_{1} \to F \overset{\psi}{\to} F_{2} \to 0$ with $F_{1} \in \SE \cap \F$ and $F_{2} \in \SE^{\perp} \cap \F$. 

Since the composition $\psi \circ \varphi$ belongs to $\Hom_{R}(M, \SE^{\perp} \cap \F)$, the assumption for the module $M$ yields the equality $\psi \circ \varphi=0$. 
The surjectivity of the maps $\varphi$ and $\psi$ implies that the equalities $F_{2}=(\psi \circ \varphi)(M)=0$. 
Then the second short exact sequence gives an isomorphism $F \cong F_{1} \in \SE \cap \F \subseteq \SE$. 
Therefore, by Lemma \ref{basic-lemma} (1), the first short exact sequence concludes that $M$ lies in $\T*\SE=\T$.

%(TT3)-----
Finally, we take an $R$-module $M$ with $\Hom_{R}(\T, M)=0$, and let us show that the module $M$ lies in $\SE^{\perp} \cap \F$. 
Lemma \ref{basic-lemma} (3) and the assumption for the module $M$ imply that $\Hom_{R}(\SE, M) \subseteq \Hom_{R}(\T, M)=0$. 
Therefore, the module $M$ lies in $\SE^{\perp}$. 
On the other hand, for each map $\varphi \in \Hom_{R}(T, M)$ with $T \in \T$, our assumption for the module $M$ gives the equality $\varphi=0$.
This equality implies that $\varphi(T)=0 \in \SE$. 
Consequently, we can conclude that the module $M$ also lies in $\F$ by condition (TT3) for the pair $(\T, \F)$.

%------
$(1) \Rightarrow (2)$: 
By Remark \ref{basic-remark-ordinal} (3), the torsion theory $(\T, \SE^{\perp} \cap \F)$ satisfies that  $\RMod=\T*(\SE^{\perp} \cap \F) \subseteq \T*\F \subseteq \RMod$. 
Then we achieve the equality $\T*\F=\RMod$, and thus the pair $(\T, \F)$ is canonical. 

Next, let us prove that the heart $\SE$ is right canonical. 
By Lemma \ref{basic-lemma} (4), it suffices to show that $(\SE \cap \F)*(\SE^{\perp} \cap \F) \supseteq \F$. 
We suppose that an $R$-module $F$ lies in $\F$. 
Remark \ref{basic-remark-ordinal} (3) provides the equality $\T*(\SE^{\perp}\cap\F)=\RMod$ for the torsion theory $(\T, \SE^{\perp} \cap \F)$. 
This equality yields a short exact sequence $0 \to T \overset{\varphi}{\to} F \to F' \to 0$ with $T \in \T$ and $F' \in \SE^{\perp} \cap \F$. 
Since the map $\varphi$ belongs to $\Hom_{R}(\T, \F)$, the module $T \cong \varphi(T)$ lies in $\SE=\SE \cap \F$ by condition (TT1) for the pair $(\T, \F)$ and Lemma \ref{basic-lemma} (3). 
Then the above short exact sequence deduces that the module $F$ lies in $(\SE \cap \F)*(\SE^{\perp} \cap \F)$. 
This completes the proof of our inclusion relation. 
\end{proof}

%10
%========================================
%= corollary of the right mutation
%========================================
We close this section by characterizing when a torsion theory in $\RMod$ has the right $\SE$-mutation, which is a torsion theory in $\RMod$. 
The following characterization is a consequence of Theorem \ref{right-mutation}. 

\begin{corollary}\setlength{\leftmargini}{18pt}\label{corollary-right-mutation}
Let $(\X, \Y)$ be a torsion theory in $\RMod$. 
For a Serre subcategory $\SE$, the following conditions are equivalent. 
\begin{enumerate}
\item The pair $(\X, \Y)$ has the right $\SE$-mutation 
\[ \Psi^{CR}_{\SE}((\X, \Y))=(\X*\SE, \SE^{\perp} \cap (\SE*\Y)).\] 

\item The heart $\SE$ of the $\SE$-connection $\Phi_{\SE}((\X, \Y))$ is right canonical. 
\end{enumerate}
\end{corollary}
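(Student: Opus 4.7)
The plan is to reduce the statement to Theorem \ref{right-mutation} applied to the $\SE$-connection, in complete parallel with the proof of Corollary \ref{corollary-left-mutation}. Set $\T=\X*\SE$ and $\F=\SE*\Y$, so that by definition $\Phi_{\SE}((\X,\Y))=(\T,\F)$ and hence $\Psi^{CR}_{\SE}((\X,\Y))=(\Psi^{R}_{\SE}\circ\Phi_{\SE})((\X,\Y))=\Psi^{R}_{\SE}((\T,\F))$. By \cite[Proposition 5.1]{Y-2024} (the result invoked in Definition \ref{definition-mutations} to justify the $\SE$-connection terminology), the pair $(\T,\F)$ is an $\SE$-torsion theory.

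The key observation is that $(\T,\F)$ is automatically canonical. Indeed, since $(\X,\Y)$ is an ordinary torsion theory, Remark \ref{basic-remark-ordinal} (3) yields $\X*\Y=\RMod$, and so
\[ \RMod=\X*\Y\subseteq (\X*\SE)*(\SE*\Y)=\T*\F\subseteq \RMod, \]
forcing $\T*\F=\RMod$. Thus $(\T,\F)$ is a canonical $\SE$-torsion theory without any additional hypothesis on $(\X,\Y)$ or $\SE$.

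It remains to feed this into Theorem \ref{right-mutation}: the pair $\Psi^{R}_{\SE}((\T,\F))=(\T,\SE^{\perp}\cap\F)$ is a torsion theory in $\RMod$ if and only if $(\T,\F)$ is canonical \emph{and} the heart $\SE$ is right canonical for $(\T,\F)$. The canonicity of $(\T,\F)$ being automatic, the condition collapses to the right canonicity of the heart $\SE$ for the $\SE$-connection $\Phi_{\SE}((\X,\Y))=(\T,\F)$, which is precisely condition (2). Since $\Psi^{R}_{\SE}((\T,\F))$ coincides with $\Psi^{CR}_{\SE}((\X,\Y))=(\X*\SE,\SE^{\perp}\cap(\SE*\Y))$ by construction, this is exactly the equivalence asserted.

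There is no real obstacle in this argument: once Theorem \ref{right-mutation} is in hand, the only substantive point is verifying that the $\SE$-connection of any ordinary torsion theory is automatically canonical, and this is a one-line consequence of the decomposition $\X*\Y=\RMod$. The proof is therefore essentially a formal deduction, symmetric to that of Corollary \ref{corollary-left-mutation}.
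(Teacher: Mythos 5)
Your proof is correct and follows the paper's argument exactly: the paper also reduces to Theorem \ref{right-mutation} by setting $\T=\X*\SE$, $\F=\SE*\Y$, noting that $\Psi^{CR}_{\SE}((\X,\Y))=\Psi^{R}_{\SE}((\T,\F))$, and checking that $(\T,\F)$ is automatically a canonical $\SE$-torsion theory via $\RMod=\X*\Y\subseteq(\X*\SE)*(\SE*\Y)\subseteq\RMod$. You have simply spelled out the computation that the paper delegates to ``the same argument as the proof of Corollary \ref{corollary-left-mutation}.''
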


\begin{proof}
For $\T=\X*\SE$ and $\F=\SE*\Y$, we have the equalities $\Psi^{CR}_{\SE}((\X, \Y))=(\Psi^{R}_{\SE} \circ \Phi_{\SE})((\X, \Y))=\Psi^{R}_{\SE}((\T, \F))$. 
Therefore, by the same argument as the proof of Corollary \ref{corollary-left-mutation}, our statements follow from Theorem \ref{right-mutation}. 
\end{proof}

%60
%========================================
%=
%= middle mutation
%=
%========================================
\section{The middle mutation}\label{section-middle-mutation}
Our purpose here is to provide necessary and sufficient conditions for a generalized torsion theory to have the middle mutation at a triple of subcategories.

%10
We begin with the following three lemmas, which will be necessary conditions concerned with the above purpose.
The first lemma investigates whether an $\SE$-torsion theory is canonical when a middle mutation exists.

%5
%========================================
%= the pair $(\T, \F)$ is canonical
%========================================
\begin{lemma}\label{canonical-pair-lemma}
Let $(\T, \F)$ be an $\SE$-torsion theory and let $(\U, \V)$ be a pair of subcategories that are contained in the heart $\SE$. 
We suppose that the pair $\left( (\T \cap {}^{\perp} \SE)*\U, \V*(\SE^{\perp} \cap \F) \right)$ is a torsion theory in $\RMod$. 
Then the pair $(\T, \F)$ is canonical. 
\end{lemma}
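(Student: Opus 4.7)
The plan is to exploit Remark \ref{basic-remark-ordinal}(3) applied to the hypothesized torsion theory, which forces
\[
\bigl( (\T \cap {}^{\perp}\SE) * \U \bigr) * \bigl( \V * (\SE^{\perp} \cap \F) \bigr) = \RMod,
\]
and then to show that the left factor of this extension is contained in $\T$ while the right factor is contained in $\F$. Since the extension operation is monotone in each argument, these two containments will give $\T * \F \supseteq \RMod$, and the reverse inclusion is trivial, yielding the desired equality $\T * \F = \RMod$.

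Concretely, I will first invoke Lemma \ref{basic-lemma}(3) to note that $\U \subseteq \SE \subseteq \T$ and $\V \subseteq \SE \subseteq \F$. Then for the left factor, I will observe that any module in $(\T \cap {}^{\perp}\SE) * \U$ fits into a short exact sequence whose sub is in $\T$ and whose quotient is in $\U \subseteq \SE$; hence it lies in $\T * \SE$, which equals $\T$ by Lemma \ref{basic-lemma}(1). Symmetrically, for the right factor, any module in $\V * (\SE^{\perp} \cap \F)$ fits into a short exact sequence with sub in $\V \subseteq \SE$ and quotient in $\F$, so it lies in $\SE * \F = \F$ by Lemma \ref{basic-lemma}(2).

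Combining these two inclusions with the equality above gives
\[
\RMod = \bigl( (\T \cap {}^{\perp}\SE) * \U \bigr) * \bigl( \V * (\SE^{\perp} \cap \F) \bigr) \subseteq \T * \F \subseteq \RMod,
\]
so $\T * \F = \RMod$, which by definition means that the pair $(\T, \F)$ is canonical. There is really no significant obstacle here: the argument is a purely formal manipulation of the extension operation together with the two $\SE$-extension-closure properties recorded in Lemma \ref{basic-lemma}. The only subtlety worth double-checking is the monotonicity $\C_{1} \subseteq \C_{2}$ and $\D_{1} \subseteq \D_{2}$ imply $\C_{1} * \D_{1} \subseteq \C_{2} * \D_{2}$, which is immediate from the definition of the extension subcategory.
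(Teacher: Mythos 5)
Your argument is correct and matches the paper's own proof: both use Lemma \ref{basic-lemma}(1)--(2) (and, explicitly or implicitly, the monotonicity of $*$) to get $(\T \cap {}^{\perp}\SE)*\U \subseteq \T$ and $\V*(\SE^{\perp}\cap\F)\subseteq\F$, then apply Remark \ref{basic-remark-ordinal}(3) to the hypothesized torsion theory to conclude $\T*\F=\RMod$.
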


\begin{proof}
Lemma \ref{basic-lemma} (1) and (2) deduce that  $(\T \cap {}^{\perp} \SE)*\U \subseteq \T*\U \subseteq \T*\SE=\T$ and $\V*(\SE^{\perp} \cap \F) \subseteq \V*\F \subseteq \SE*\F =\F$. 
Hence our assumption provides $\RMod=\left( (\T \cap {}^{\perp} \SE)*\U \right)*\left( \V*(\SE^{\perp} \cap \F) \right) \subseteq \T*\F \subseteq \RMod$ by Remark \ref{basic-remark-ordinal} (3). 
Consequently, the pair $(\T, \F)$ is canonical. 
\end{proof}

%10
%========================================
%= the heart $\SE$ is canonical
%========================================
The second lemma indicates that the existence of middle mutation for an $\SE$-torsion theory determines whether the heart is canonical. 

\begin{lemma}\label{canonical-heart-lemma}
Let $(\T, \F)$ and $(\U, \V)$ be as in Lemma \ref{canonical-pair-lemma}. 
We suppose that the pair $\left( (\T \cap {}^{\perp} \SE)*\U, \V*(\SE^{\perp} \cap \F) \right)$ is a torsion theory in $\RMod$. 
Then the heart $\SE$ is left canonical and right canonical. 
\end{lemma}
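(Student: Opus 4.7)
The plan is to exploit the short exact sequences provided by the assumed torsion theory $\bigl((\T \cap {}^{\perp} \SE)*\U,\; \V*(\SE^{\perp} \cap \F)\bigr)$ and then refine them using condition (TT1) together with the extension-closure properties collected in Lemma \ref{basic-lemma}. Both halves of the statement will follow from the same pattern: decompose a module in $\T$ (respectively $\F$) via that torsion theory, use (TT1) to push a middle piece into $\SE$, and then re-split so as to absorb all $\SE$-content onto a single side of the sequence.

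For left canonicity, I would take $T \in \T$ and apply Remark \ref{basic-remark-ordinal}~(3) to obtain an exact sequence $0 \to A \to T \to B \to 0$ with $A \in (\T \cap {}^{\perp} \SE)*\U$ and $B \in \V*(\SE^{\perp} \cap \F)$. Since $\U, \V \subseteq \SE$, Lemma \ref{basic-lemma}~(1),(2) give $A \in \T$ and $B \in \F$, and (TT1) applied to the surjection $T \twoheadrightarrow B$ then forces $B \in \SE$; in particular $B \in \T \cap \SE$. The piece $A$ is not yet in $\T \cap {}^{\perp} \SE$, so I would further decompose it as $0 \to A' \to A \to U \to 0$ with $A' \in \T \cap {}^{\perp} \SE$ and $U \in \U \subseteq \SE$, and then consider the quotient $T/A'$, which fits into $0 \to U \to T/A' \to B \to 0$. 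Extension-closure of $\SE$ yields $T/A' \in \SE \subseteq \T$, and the sequence $0 \to A' \to T \to T/A' \to 0$ witnesses $T \in (\T \cap {}^{\perp}\SE) * (\T \cap \SE)$. Combined with Lemma \ref{basic-lemma}~(4), this gives left canonicity.

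For right canonicity I would argue dually, starting from $F \in \F$. The same torsion-theoretic decomposition gives $0 \to A \to F \to B \to 0$ with $A \in \T$ and $B \in \F$, and now (TT1) applied to the inclusion $A \hookrightarrow F$ together with Lemma \ref{basic-lemma}~(3) yields $A \in \SE$. To handle $B$, I would refine it as $0 \to V \to B \to W \to 0$ with $V \in \V \subseteq \SE$ and $W \in \SE^{\perp} \cap \F$, and let $X \subseteq F$ be the preimage of $V$ under $F \twoheadrightarrow B$, so that $0 \to A \to X \to V \to 0$ is exact and $F/X \cong W$. Extension-closure of $\SE$ then forces $X \in \SE \subseteq \F$, i.e.\ $X \in \SE \cap \F$, and the sequence $0 \to X \to F \to W \to 0$ realises $F \in (\SE \cap \F) * (\SE^{\perp} \cap \F)$, which together with Lemma \ref{basic-lemma}~(4) gives right canonicity.

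The main obstacle I anticipate is the recombination step in each half: the raw kernel/cokernel supplied by the outer torsion theory does not by itself sit in the intersection $\T \cap {}^{\perp}\SE$ (respectively, $\SE^{\perp}\cap \F$) that one actually needs. The key trick in both cases is to split off the $\U$-piece (respectively, the $\V$-piece) from that raw piece and glue it onto the opposite side of the sequence, using the Serre property of $\SE$ to guarantee that the newly assembled piece still lies in $\SE$. Once this recombination is in place, the remaining verifications reduce to direct applications of Lemma \ref{basic-lemma} and condition (TT1).
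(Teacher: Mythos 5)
Your proof is correct and follows essentially the same route as the paper's: decompose a module of $\T$ (resp.\ $\F$) via the canonical sequence of the assumed torsion theory, use (TT1) to place the relevant pieces in $\SE$, and recombine by the pushout/pullback along the $\U$- (resp.\ $\V$-) piece. The only, harmless, difference is that you apply (TT1) directly to the full surjection $T \twoheadrightarrow B$ (resp.\ the inclusion $A \hookrightarrow F$) to get $B \in \SE$ (resp.\ $A \in \SE$), whereas the paper composes one step further and invokes $\Hom_{R}(\SE, \SE^{\perp})=0$ (resp.\ $\Hom_{R}({}^{\perp}\SE, \SE)=0$) to show the outermost piece vanishes; your version slightly shortens that step.
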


\begin{proof}
%left canonical
We start by showing that the heart $\SE$ is left canonical, namely, we will verify the equality $(\T \cap {}^{\perp} \SE) *(\T \cap \SE)=\T$. 
By Lemma \ref{basic-lemma} (4), we only need to prove that $(\T \cap {}^{\perp} \SE) *(\T \cap \SE) \supseteq \T$. 

Let us take an $R$-module $T \in \T$. 
Then, by Remark \ref{basic-remark-ordinal} (3), our assumption gives the canonical short exact sequence $0 \to X \to T \overset{\varphi}{\to} Y \to 0$ with $X \in (\T \cap {}^{\perp} \SE)*\U$ and $Y \in \V*(\SE^{\perp}\cap \F)$. 
Moreover, we can construct the following diagram of exact sequences
\[  \xymatrix{
& 0 \ar[d] & & 0 \ar[d] & & \\
& T' \ar[d] &  & V \ar[d] & & \\
0 \ar[r] & X \ar[r] \ar[d] & T \ar[r]^{\varphi} & Y \ar[r] \ar[d]^{\psi} & 0 \\
& U \ar[d] & & F \ar[d] & \\
& 0 & & 0 & & \\
  } \]
with $T' \in \T \cap {}^{\perp} \SE$, $U \in \U$, $V \in \V$, and $F \in \SE^{\perp} \cap \F$.

The composition $\psi \circ \varphi$ belongs to $\Hom_{R}(\T, \SE^{\perp} \cap \F) \subseteq \Hom_{R}(\T,  \F)$. 
Then we obtain that $F=(\psi \circ \varphi)(T) \in \SE$ by condition (TT1) for the pair $(\T, \F)$ and the surjectivity of the map $\psi \circ \varphi$. 
This shows that the identity map $\mathrm{id}_{F}$ on the module $F$ belongs to $\Hom_{R}(\SE, \SE^{\perp} \cap \F) \subseteq \Hom_{R}(\SE, \SE^{\perp})=0$. 
Since this means that $F=0$, the right column of the diagram implies an isomorphism $Y \cong V$. 

We now consider the following push-out diagram of $R$-modules with exact rows and columns: 
\[  \xymatrix{
& 0 \ar[d] & 0 \ar[d] & & & \\
& T' \ar@{=}[r] \ar[d] & T' \ar[d] & & & \\
0 \ar[r] & X \ar[r] \ar[d] & T \ar[r] \ar[d]  & V \ar[r] \ar@{=}[d]& 0 \\
0 \ar[r] & U \ar[r] \ar[d] & S \ar[r] \ar[d] & V \ar[r] & 0. \\
& 0 & 0 & & & \\
  } \]
Since the heart $\SE$ is a Serre subcategory, especially an extension closed subcategory, the bottom row of the second diagram implies that the module $S$ lies in $\U *\V \subseteq \SE *\SE =\SE$. 
Therefore, the middle column of the second diagram deduces that the module $T$ lies in $(\T \cap {}^{\perp} \SE)*\SE=(\T \cap {}^{\perp} \SE)*(\T \cap \SE)$ by Lemma \ref{basic-lemma} (3). 
Consequently, we can verify that the heart $\SE$ is left canonical.

%right canonical
Next, we will prove that the heart $\SE$ is right canonical.
To verify that the equality $(\SE \cap \F)*(\SE^{\perp} \cap \F)=\F$ holds, it suffices to show that $(\SE \cap \F) *(\SE^{\perp} \cap \F) \supseteq \F$ by Lemma \ref{basic-lemma} (4). 

We take an $R$-module $F \in \F$. 
By Remark \ref{basic-remark-ordinal} (3), our assumption provides the canonical short exact sequence $0 \to X \overset{\varphi}{\to} F \to Y \to 0$ with $X \in (\T \cap {}^{\perp} \SE)*\U$ and $Y \in \V*(\SE^{\perp}\cap \F)$. 
Then we can obtain the following diagram of exact sequences 
\[  \xymatrix{
& 0 \ar[d] & & 0 \ar[d] & & \\
& T \ar[d]^{\psi} &  & V \ar[d] & & \\
0 \ar[r] & X \ar[r]^{\varphi} \ar[d] & F \ar[r] & Y \ar[r] \ar[d] & 0 \\
& U \ar[d] & & F' \ar[d] & \\
& 0 & & 0 & & \\
  } \]
with $T \in \T \cap {}^{\perp} \SE$, $U \in \U$, $V \in \V$, and $F' \in \SE^{\perp} \cap \F$.

Since the composition $\varphi \circ \psi$ belongs to $\Hom_{R}(\T \cap {}^{\perp} \SE, \F) \subseteq \Hom_{R}(\T,  \F)$, we obtain that $T \cong (\varphi \circ \psi)(T) \in \SE$  by condition (TT1) for the pair $(\T, \F)$ and the injectivity of the map $\varphi \circ \psi$. 
Therefore, the identity map $\mathrm{id}_{T}$ on the module $T$ belongs to $\Hom_{R}( \T \cap {}^{\perp} \SE, \SE) \subseteq \Hom_{R}({}^{\perp} \SE, \SE)=0$. 
We then have the equality $T=0$, and thus the left column of the third diagram provides an isomorphism $X \cong U$. 

We make the following pull-back diagram of $R$-modules with exact rows and columns:  
\[  \xymatrix{
& & 0 \ar[d] & 0 \ar[d] & & \\
0 \ar[r] & U \ar[r] \ar@{=}[d] & S \ar[r] \ar[d] & V \ar[r] \ar[d] & 0 \\
0 \ar[r] & U \ar[r] & F \ar[r] \ar[d] & Y \ar[r] \ar[d]& 0. \\
& & F' \ar@{=}[r] \ar[d] & F' \ar[d] & & \\
& & 0 & 0 & & \\
  } \]
Since Serre subcategories are closed under extension modules, the upper row of the last diagram deduces that the module $S$ lies in $\U *\V \subseteq \SE *\SE =\SE$. 
Furthermore, the middle column of the last diagram implies that the module $F$ lies in $\SE*(\SE^{\perp} \cap \F)=(\SE \cap \F)*(\SE^{\perp}\cap \F)$ by Lemma \ref{basic-lemma} (3). 
Consequently, we can conclude that the heart $\SE$ is right canonical. 
\end{proof}

%10
%========================================
%= \U*\V=\SE, \Hom_{R}(\U, \V)=0$
%========================================
The third lemma states that if a pair $(\U, \V)$ of subcategories gives the middle mutation to an $\SE$-torsion theory, then the pair $(\U, \V)$ has to behave like a torsion theory in the heart $\SE$.
 
\begin{lemma}\label{torsion-pair-heart-lemma}
Let $(\T, \F)$ and $(\U, \V)$ be as in Lemma \ref{canonical-pair-lemma}. 
We suppose that the pair $\left( (\T \cap {}^{\perp} \SE)*\U, \V*(\SE^{\perp} \cap \F) \right)$ is a torsion theory in $\RMod$. 
Then one has the equalities $\Hom_{R}(\U, \V)=0$ and $\U*\V=\SE$. 
\end{lemma}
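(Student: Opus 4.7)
The plan is to treat the two assertions $\Hom_{R}(\U,\V)=0$ and $\U*\V=\SE$ separately, and I expect each to reduce quickly to the torsion-theory axioms combined with the Serre property of $\SE$.

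For the vanishing $\Hom_{R}(\U,\V)=0$, the observation to make first is that every subcategory in the paper contains the zero module, so any $U\in\U$ fits into the exact sequence $0\to 0\to U\to U\to 0$, placing $U$ inside $(\T\cap {}^{\perp}\SE)*\U$. Dually every $V\in\V$ lies in $\V*(\SE^{\perp}\cap\F)$. The orthogonality axiom for the hypothesized torsion theory then yields $\Hom_{R}(\U,\V)=0$ at once.

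For $\U*\V=\SE$, the inclusion $\U*\V\subseteq\SE$ is immediate: both $\U$ and $\V$ are contained in $\SE$, which is extension-closed, so $\U*\V\subseteq\SE*\SE=\SE$. The reverse inclusion is where the real work lies. I would start with an arbitrary $S\in\SE$ and invoke the canonical short exact sequence from Remark \ref{basic-remark-ordinal} (3) for the torsion theory $\left((\T\cap{}^{\perp}\SE)*\U,\ \V*(\SE^{\perp}\cap\F)\right)$, producing $0\to X\to S\to Y\to 0$ with $X\in(\T\cap{}^{\perp}\SE)*\U$ and $Y\in\V*(\SE^{\perp}\cap\F)$. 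Unpacking each of these extensions yields short exact sequences $0\to T'\to X\to U\to 0$ and $0\to V\to Y\to F'\to 0$ with $T'\in\T\cap{}^{\perp}\SE$, $U\in\U$, $V\in\V$, and $F'\in\SE^{\perp}\cap\F$.

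The step I expect to be the crux of the argument is collapsing the stray terms $T'$ and $F'$ by a double use of Serreness. Since $T'$ is a submodule of $X$, which is a submodule of $S\in\SE$, Serreness of $\SE$ forces $T'\in\SE$; combined with $T'\in{}^{\perp}\SE$, the identity map on $T'$ vanishes, so $T'=0$. Symmetrically, $F'$ is a quotient of $Y$, a quotient of $S\in\SE$, hence $F'\in\SE\cap\SE^{\perp}=0$. Therefore $X\cong U\in\U$ and $Y\cong V\in\V$, so the middle row gives $S\in\U*\V$, completing the reverse inclusion. I do not anticipate any genuine obstacle beyond this bookkeeping; once $S$ is threaded through the canonical decomposition, the conclusion is forced by the orthogonality relations together with the fact that $\SE$ is closed under submodules and quotients.
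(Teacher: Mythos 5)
Your proof is correct and follows essentially the same route as the paper: both establish $\Hom_R(\U,\V)=0$ by embedding $\U$ and $\V$ into the two sides of the assumed torsion theory, and both obtain $\U*\V\supseteq\SE$ by applying the canonical short exact sequence of Remark \ref{basic-remark-ordinal} (3) to $S\in\SE$, unpacking the two extension terms, and collapsing the stray pieces $T'$ and $F'$. The only cosmetic difference is in the collapsing step: you invoke Serreness to place $T'$ (a submodule of $S$) in $\SE$, then observe $\SE\cap{}^\perp\SE=0$, whereas the paper notes directly that the composed injection $T'\to S$ lies in $\Hom_R({}^\perp\SE,\SE)=0$ and hence is zero --- these are the same observation phrased two ways.
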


\begin{proof}
%\Hom_{R}(\U, \V)=0
The first equality obviously holds. 
Indeed, our assumption implies that $\Hom_{R}(\U, \V) \subseteq \Hom_{R}((\T \cap {}^{\perp} \SE)*\U, \V*(\SE^{\perp} \cap \F))=0$.

%\U*\V=\SE
Now let us prove the second equality. 
Since the heart $\SE$ is a Serre subcategory, we have $\U*\V \subseteq \SE *\SE=\SE$. 
Conversely, we take an $R$-module $S \in \SE$. 
Then our assumption implies that there exists the canonical short exact sequence $0 \to X \overset{\varphi}{\to} S \to Y \to 0$ with $X \in (\T \cap {}^{\perp} \SE)*\U$ and $Y \in \V*(\SE^{\perp} \cap \F)$ by Remark \ref{basic-remark-ordinal} (3). 
We can thus construct the following diagram of exact sequences 
\[  \xymatrix{
& 0 \ar[d] & & 0 \ar[d] & & \\
& T \ar[d]^{\psi} &  & V \ar[d] & & \\
0 \ar[r] & X \ar[r]^{\varphi} \ar[d] & S \ar[r]^{\lambda} & Y \ar[r] \ar[d]^{\nu} & 0 \\
& U \ar[d] & & F \ar[d] & \\
& 0 & & 0 & & \\
  } \]
with $T \in \T \cap {}^{\perp} \SE$, $U \in \U$, $V \in \V$, and $F \in \SE^{\perp} \cap \F$. 

The composition $\varphi \circ \psi$ belongs to $\Hom_{R}(\T \cap {}^{\perp} \SE, \SE) \subseteq \Hom_{R}({}^{\perp} \SE, \SE)=0$. 
The injectivity of the map $\varphi \circ \psi$ implies that $T \cong (\varphi \circ \psi)(T)=0$. 
Therefore, the left column of the diagram yields an isomorphism $X \cong U$. 
On the other hand, the composition $\nu \circ \lambda$ belongs to $\Hom_{R}(\SE, \SE^{\perp} \cap \F) \subseteq \Hom_{R}(\SE, \SE^{\perp})=0$. 
Since the maps $\lambda$ and $\nu$ are surjective, we obtain the equalities $F=(\nu \circ \lambda)(S)=0$. 
Then the right column of the diagram gives an isomorphism $Y \cong V$. 
Consequently, the row of the diagram provides the short exact sequence $0 \to U \to S \to V \to 0$ with $U \in U$ and $V \in \V$. 
Namely, we can conclude that the module $S$ lies in $\U*\V$. 
\end{proof}

%30
%========================================
%= the middle mutation
%========================================
Combining the necessary conditions in the above three lemmas, we can now obtain sufficient conditions for an $\SE$-torsion theory to have the middle mutation at the triple $(\SE, \U, \V)$ of subcategories. 

\begin{theorem}\setlength{\leftmargini}{18pt}\label{middle-mutation}
Let $(\T, \F)$ be an $\SE$-torsion theory and let $(\U, \V)$ be a pair of subcategories that are contained in the heart $\SE$. 
Then the following conditions $\mathrm{(1)}$ and $\mathrm{(2)}$ are equivalent. 
\begin{enumerate}
\item The middle $(\SE, \U, \V)$-separation $\Psi^{M}_{(\SE, \U, \V)}((\T, \F))=\left( (\T \cap {}^{\perp} \SE)*\U, \V*(\SE^{\perp} \cap \F) \right)$ is a torsion theory in $\RMod$. 

\item The following three conditions are satisfied: 
\begin{enumerate}
\item The pair $(\T, \F)$ is canonical. 

\item The heart $\SE$ is left canonical and right canonical. 

\item One has the equalities $\Hom_{R}(\U, \V)=0$ and $\U*\V=\SE$. 
\end{enumerate}
\end{enumerate}
\end{theorem}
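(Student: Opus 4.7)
The plan is to split into the two implications. For $(1) \Rightarrow (2)$, the three preceding lemmas do all the work: Lemma \ref{canonical-pair-lemma} yields (a), Lemma \ref{canonical-heart-lemma} yields (b), and Lemma \ref{torsion-pair-heart-lemma} yields (c); no further argument is needed.

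For $(2) \Rightarrow (1)$, abbreviate $\X = (\T \cap {}^{\perp}\SE)*\U$ and $\Y = \V*(\SE^{\perp} \cap \F)$. I will verify the two facts $\Hom_{R}(\X,\Y)=0$ and $\X*\Y=\RMod$; together with Remark \ref{basic-remark-ordinal} these suffice, since any $M$ with $\Hom_{R}(M,\Y)=0$ admits a decomposition $0 \to X \to M \to Y \to 0$ from $\X*\Y=\RMod$ whose quotient $M \twoheadrightarrow Y$ must vanish, forcing $M=X\in\X$, and dually for $\Y$.

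For the Hom-vanishing, take $X\in\X$ with $0 \to T_{0} \to X \to U \to 0$ and $Y\in\Y$ with $0 \to V \to Y \to F_{0} \to 0$. Since $\U,\V \subseteq \SE$, Lemma \ref{basic-lemma} (1) and (2) give $X \in \T*\SE=\T$ and $Y \in \SE*\F=\F$, so (TT1) forces $\varphi(X)\in\SE$ for any $\varphi\in\Hom_{R}(X,Y)$. The composition $\varphi(X)\hookrightarrow Y\twoheadrightarrow F_{0}$ lies in $\Hom_{R}(\SE,\SE^{\perp})=0$, so $\varphi$ factors through $V\subseteq Y$; then the restriction $\varphi|_{T_{0}}$ lies in $\Hom_{R}({}^{\perp}\SE,\SE)=0$, so $\varphi$ descends to a map $U \to V$, which vanishes by condition (c).

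For $\X*\Y=\RMod$, I construct a filtration of an arbitrary $M$. Canonicity of $(\T,\F)$ provides $0 \to T \to M \to F \to 0$; left and right canonicity refine these as $0 \to T_{1} \to T \to T_{2} \to 0$ with $T_{1} \in \T \cap {}^{\perp}\SE$, $T_{2} \in \SE$, and $0 \to F_{1} \to F \to F_{2} \to 0$ with $F_{1} \in \SE$, $F_{2} \in \SE^{\perp} \cap \F$. Let $M' \subseteq M$ be the preimage of $F_{1}$ under $M \twoheadrightarrow F$; then $M'/T_{1}$ is an extension of $F_{1}$ by $T_{2}$ inside $\SE$, so $M'/T_{1} \in \SE = \U*\V$. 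Taking a sequence $0 \to U \to M'/T_{1} \to V \to 0$ and letting $X \subseteq M'$ be the preimage of $U$, the exact sequence $0 \to T_{1} \to X \to U \to 0$ shows $X \in \X$, while $0 \to V \to M/X \to F_{2} \to 0$ (from $M'/X \cong V$ and $M/M' \cong F_{2}$) shows $M/X \in \Y$. The main obstacle is the Hom-vanishing chase: it cycles through (TT1) and both orthogonality identities $\Hom_{R}(\SE,\SE^{\perp})=0$ and $\Hom_{R}({}^{\perp}\SE,\SE)=0$ before the final appeal to $\Hom_{R}(\U,\V)=0$, so the bookkeeping of which submodule is targeted at each stage requires care; the filtration construction is then routine once the $\T\cap\SE$ and $\SE\cap\F$ pieces are packaged into the single $\SE$-module $M'/T_{1}$.
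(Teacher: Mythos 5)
Your proof is correct, and the $(2)\Rightarrow(1)$ direction takes a genuinely different and more economical route than the paper's.

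The paper verifies the three torsion-theory axioms head-on: it shows $\Hom_{R}(\X,\Y)=0$, then proves "$\Hom_{R}(M,\Y)=0\Rightarrow M\in\X$'' by constructing a filtration of $M$ and using the assumed Hom-vanishing twice to kill the $\SE^{\perp}\cap\F$ and $\V$ pieces that appear in the quotient, and finally proves the dual statement by a second, dual diagram chase. You instead prove $\Hom_{R}(\X,\Y)=0$ and $\X*\Y=\RMod$ directly, and observe (correctly, and with a one-line justification) that these two facts alone give a torsion theory: a canonical sequence $0\to X\to M\to Y\to 0$ with $X\in\X$, $Y\in\Y$ immediately forces $M\in\X$ when $\Hom_{R}(M,\Y)=0$ and $M\in\Y$ when $\Hom_{R}(\X,M)=0$, because $\X$ and $\Y$ are closed under isomorphism. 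The filtration you build (canonicity gives $T\subseteq M$ with $M/T\in\F$; left and right canonicity refine $T$ and $M/T$; the two middle $\SE$-pieces $T_{2}$ and $F_{1}$ get packaged into the single $\SE$-module $M'/T_{1}$, which you then split along $\U*\V=\SE$) is essentially the same construction the paper performs inside its proof of axiom (TT2), but you keep the discarded pieces as the $\Y$-part of the sequence rather than having to prove they vanish. This saves the entire dual diagram chase for axiom (TT3) and removes the need to invoke the Hom-vanishing hypothesis mid-construction. Your Hom-vanishing argument is also slightly shorter than the paper's: you apply (TT1) directly to $\varphi:X\to Y$ after noting $X\in\T*\SE=\T$ and $Y\in\SE*\F=\F$ via Lemma \ref{basic-lemma}, rather than composing with the inclusion and projection first. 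One small remark: you cite Remark \ref{basic-remark-ordinal}, but what you actually use is the elementary fact (which you supply the proof of) that $\Hom_{R}(\X,\Y)=0$ together with $\X*\Y=\RMod$ forces axioms (TT2) and (TT3); Remark \ref{basic-remark-ordinal} itself requires the closure conditions in its statement, which you never verify and do not need.
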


\begin{proof}
For simplicity, we denote by $\X=(\T \cap {}^{\perp} \SE)*\U$ and $\Y=\V*(\SE^{\perp} \cap \F)$.

$(1) \Rightarrow (2)$: It follows from Lemmas \ref{canonical-pair-lemma}, \ref{canonical-heart-lemma}, and \ref{torsion-pair-heart-lemma}. 

$(2) \Rightarrow (1)$:
%(TT1) 
We start by showing the equality $\Hom_{R}(\X, \Y)=0$. 
Let us take a map $\varphi \in \Hom_{R}(X, Y)$ with $X \in \X$ and $Y \in \Y$. 
Then we will achieve the equality $\varphi=0$. 

From the assumption of the modules $X$ and $Y$, we can take the following diagram of exact sequences 
\[  \xymatrix{
0 \ar[r]& T \ar[r]^{\alpha}& X \ar[r]^{\beta} \ar[d]^{\varphi} & U \ar[r]& 0 \\
0 \ar[r]& V \ar[r]^{\gamma}& Y \ar[r]^{\delta}& F \ar[r]& 0 \\
  } \]
with $T \in \T \cap {}^{\perp} \SE$, $U \in \U$, $V \in \V$, and $F \in \SE^{\perp} \cap \F$. 

Since the composition $\delta \circ \varphi \circ \alpha$ belongs to $\Hom_{R}(\T \cap {}^{\perp} \SE, \SE^{\perp} \cap \F) \subseteq \Hom_{R}(\T, \F)$, the module $(\delta \circ \varphi \circ \alpha)(T)$ lies in $\SE$ by condition (TT1) for the pair $(\T, \F)$. 
We now decompose the map $\delta \circ \varphi \circ \alpha$ into the following form 
\[ \begin{CD} T@>{\eta}>> (\delta \circ \varphi \circ \alpha)(T) @>{i}>> F, \end{CD}\] 
where $i$ is the inclusion map. 
Then the inclusion map $i$ belongs to $\Hom_{R}(\SE, \SE^{\perp} \cap \F) \subseteq \Hom_{R}(\SE, \SE^{\perp})=0$. 
Hence, we obtain the equality $\delta \circ \varphi \circ \alpha=i\circ \eta=0$. 
Consequently, there exists $\lambda \in \Hom_{R}(T, V)$ such that $\varphi \circ \alpha=\gamma \circ \lambda$, and we obtain the following commutative diagram with the exact row. 
\[  \xymatrix{
 & & T \ar[d]^(0.6){\varphi \circ \alpha} \ar@{.>}[ld]_(0.4){\lambda } \ar[rd]^{0}& &  \\
0 \ar[r]& V \ar[r]^{\gamma}& Y \ar[r]^{\delta}& F \ar[r]& 0 \\
  } \]
  
The above map $\lambda$ is a morphism from the module $T \in \T \cap {}^{\perp} \SE \subseteq {}^{\perp} \SE$ to the module $V \in \V \subseteq \SE$. 
We thus have the equality $\lambda=0$. 
This implies that $\varphi \circ \alpha=\gamma \circ \lambda=0$. 
Therefore, there exists $\mu \in \Hom_{R}(U, Y)$ such that $\varphi=\mu \circ \beta$, and we have the following commutative diagram of exact sequences. 
\[  \xymatrix{
0 \ar[r]& T \ar[r]^{\alpha} \ar[rd]_{0} & X \ar[r]^{\beta} \ar[d]^(0.4){\varphi} & U \ar[r] \ar@{.>}[ld]^{\mu} & 0 \\
0 \ar[r]& V \ar[r]^{\gamma}& Y \ar[r]^{\delta}& F \ar[r]& 0 \\
  } \]

The composition $\delta \circ \mu$ is a morphism from the module $U \in \U \subseteq \SE$ to the module $F \in \SE^{\perp} \cap \F \subseteq \SE^{\perp}$. 
Hence, we obtain the equality $\delta \circ \mu=0$. 
Consequently, there exists $\nu \in \Hom_{R}(U, V)$ such that $\mu=\gamma \circ \nu$, and this gives the following commutative diagram with the exact row. 
\[  \xymatrix{
 & & U \ar[d]^(0.6){\mu} \ar@{.>}[ld]_(0.4){\nu } \ar[rd]^{0}& &  \\
0 \ar[r]& V \ar[r]^{\gamma}& Y \ar[r]^{\delta}& F \ar[r]& 0 \\
  } \]

We now note that our assumption (c) provides that $\nu \in \Hom_{R}(\U, \V)=0$. 
Consequently, we achieve the equalities $\varphi=\mu \circ \beta=\gamma \circ \nu \circ \beta=0$. 
This completes the proof of the equality $\Hom_{R}(\X , \Y)=0$.

%(TT2)
Secondly, we suppose that an $R$-module $M$ satisfies $\Hom_{R}(M, \Y)=0$. 
We claim that the module $M$ lies in $\X$. 
Since we have the equality $\T*\F=\RMod$ by assumption (a), there exists a short exact sequence $0 \to T_{1} \to M \to F_{1} \to 0$ with $T_{1} \in \T$ and $F_{1} \in \F$. 
Next, the equality $(\T \cap {}^{\perp} \SE)*(\T \cap \SE)=\T$ holds by assumption (b). 
Thus the module $T_{1}$ has a short exact sequence $0 \to T_{2} \to T_{1} \to T_{3} \to 0$ with $T_{2} \in \T \cap {}^{\perp} \SE$ and $T_{3} \in \T \cap \SE$.  
Then we can construct the following push-out diagram 
\[  \xymatrix{
& 0 \ar[d] & 0 \ar[d] & & & \\
& T_{2} \ar@{=}[r] \ar[d] & T_{2} \ar[d] & & & \\
0 \ar[r] & T_{1} \ar[r] \ar[d] & M \ar[r] \ar[d]^{\varphi}  & F_{1} \ar[r] \ar@{=}[d]& 0 \\
0 \ar[r] & T_{3} \ar[r] \ar[d] & F_{2} \ar[r] \ar[d] & F_{1} \ar[r] & 0 \\
& 0 & 0 & & & \\
  } \]
of $R$-modules with exact rows and columns. 

The bottom row of the push-out diagram implies that the module $F_{2}$ lies in $(\T\cap \SE) * \F \subseteq \SE*\F=\F$ by Lemma \ref{basic-lemma} (2). 
Since assumption (b) provides the equality $(\SE \cap \F)*(\SE^{\perp} \cap \F)=\F$, the middle column of the push-out diagram gives the following diagram of exact sequences 
\[  \xymatrix{
& & & 0 \ar[d] & & \\
& & & F_{3} \ar[d]^{\alpha} & & \\
0 \ar[r] & T_{2} \ar[r] & M \ar[r]^{\varphi} & F_{2} \ar[r] \ar[d]^{\beta} & 0 \\
& & & F_{4} \ar[d] & & \\
& & & 0 & & \\
  } \]
with $F_{3} \in \SE \cap \F$ and $F_{4} \in \SE^{\perp} \cap \F$. 
We note that the module $F_{4}$ lies in $\SE^{\perp} \cap \F \subseteq \V*(\SE^{\perp} \cap \F)=\Y$. 
Thus, our assumption for the module $M$ deduces that the composition $\beta \circ \varphi \in \Hom_{R}(M, \Y)=0$. 
Since the maps $\varphi$ and $\beta$ are surjective, the equalities $F_{4}=(\beta \circ \varphi)(M)=0$ holds. 
Then the column of the above diagram gives an isomorphism $F_{2} \cong F_{3}$. 

Let $\psi=\alpha^{-1}\circ \varphi$. 
We note that the module $F_{3}$ lies in $\SE \cap \F =\SE=\U*\V$ by Lemma \ref{basic-lemma} (3) and assumption (c). 
Hence there exists the following diagram of exact sequences 
\[  \xymatrix{
& & & 0 \ar[d] & & \\
& & & U \ar[d]^{\gamma } & & \\
0 \ar[r] & T_{2} \ar[r] & M \ar[r]^{\psi} & F_{3} \ar[r] \ar[d]^{\delta } & 0 \\
& & & V \ar[d] & & \\
& & & 0 & & \\
  } \]
with $U \in \U$ and $V \in \V$. 
Since the module $V$ lies in $\V \subseteq \V*(\SE^{\perp} \cap \F)=\Y$, our assumption for the module $M$ implies that the composition $\delta \circ \psi \in \Hom_{R}(M, \Y)=0$. 
Then the surjectivity of the maps $\psi$ and $\delta$ yields the equalities $V=(\delta \circ \psi)(M)=0$. 
Therefore, the column of the above diagram gives an isomorphism $F_{3} \cong U$. 

We can now construct the short exact sequence 
\[ \xymatrix{ 
0 \ar[r] & T_{2} \ar[r] & M \ar[r]^{\gamma^{-1} \circ \psi} & U \ar[r] & 0 
} \]
with the module $T_{2} \in \T \cap {}^{\perp} \SE$ and the module $U \in \U$. 
Consequently, we achieve that the module $M$ lies in $(\T \cap {}^{\perp} \SE)*\U=\X$, and thus the proof of our claim is completed.

%(TT3)
Finally, we suppose that an $R$-module $M$ satisfies $\Hom_{R}(\X, M)=0$. 
Then we will verify that the module $M$ lies in $\Y$. 
Assumption (a) gives the equality $\T*\F=\RMod$. 
We thus have a short exact sequence $0 \to T_{1} \to M \to F_{1} \to 0$ with $T_{1} \in \T$ and $F_{1} \in \F$. 
Additionally, we have the equality $(\SE \cap \F)*(\SE^{\perp} \cap \F)=\F$ by assumption (b). 
Therefore, the module $F_{1}$ has a short exact sequence $0 \to F_{2} \to F_{1} \to F_{3} \to 0$ with $F_{2} \in \SE \cap \F$ and $F_{3} \in \SE^{\perp} \cap \F$.  
Then we can construct the following pull-back diagram 
\[  \xymatrix{
& & 0 \ar[d] & 0 \ar[d] & & \\
0 \ar[r] & T_{1} \ar[r] \ar@{=}[d] & T_{2} \ar[r] \ar[d]^{\varphi} & F_{2} \ar[r] \ar[d] & 0 \\
0 \ar[r] & T_{1} \ar[r] & M \ar[r] \ar[d] & F_{1} \ar[r] \ar[d]& 0 \\
& & F_{3} \ar@{=}[r] \ar[d] & F_{3} \ar[d] & & \\
& & 0 & 0 & & \\
  } \]
of $R$-modules with exact rows and columns. 

The upper row of the pull-back diagram deduces that the module $T_{2}$ lies in $\T * (\SE \cap \F) \subseteq \T *\SE=\T$ by Lemma \ref{basic-lemma} (1). 
Since we have the equality $(\T \cap {}^{\perp} \SE)* (\T \cap \SE)=\T$ by assumption (b), the middle column of the pull-back diagram gives the following diagram of exact sequences 
\[  \xymatrix{
& 0 \ar[d] & & & & \\
& T_{3} \ar[d]^{\alpha} & & & & \\
0 \ar[r] & T_{2} \ar[r]^{\varphi} \ar[d]^{\beta} & M \ar[r]& F_{3} \ar[r] & 0 \\
& T_{4} \ar[d]  & & & & \\
& 0 & & & & \\
  } \]
with $T_{3} \in \T \cap {}^{\perp} \SE$ and $T_{4} \in \T \cap \SE$. 
Note that the module $T_{3}$ lies in $\T \cap {}^{\perp} \SE \subseteq (\T \cap {}^{\perp} \SE)*\U =\X$. 
Then our assumption for the module $M$ implies that the composition $\varphi \circ \alpha \in \Hom_{R}(\X, M)=0$, and we see that $(\varphi \circ \alpha)(T_{3})=0$ holds. 
The injectivity of the maps $\varphi$ and $\alpha$ deduces that $T_{3}=0$. 
Therefore, the column of the above diagram yields an isomorphism $T_{2} \cong T_{4}$. 

Lemma \ref{basic-lemma} (3) and assumption (c) imply that the module $T_{4}$ lies in $\T \cap \SE=\SE=\U *\V$. 
Then, letting $\psi=\varphi \circ \beta^{-1}$, we have the following diagram of exact sequences 
\[  \xymatrix{
& 0 \ar[d] & & & & \\
& U \ar[d]^{\gamma } & & & & \\
0 \ar[r] & T_{4} \ar[r]^{\psi} \ar[d]^{\delta } & M \ar[r] & F_{3} \ar[r] & 0 \\
& V \ar[d] & & & & \\
& 0 & & & & \\
  } \]
with $U \in \U$ and $V \in \V$. 
We note that the module $U$ lies in $\U \subseteq (\T\cap {}^{\perp} \SE)*\U=\X$. 
Thus, our assumption for the module $M$ deduces that the composition $\psi \circ \gamma \in \Hom_{R}(\X, M)=0$, and then the equality $(\psi \circ \gamma)(U)=0$ holds. 
The injectivity of the maps $\psi$ and $\gamma$ provide that $U=0$. 
Therefore, the column of the last diagram yields an isomorphism $T_{4} \cong V$. 

We now get the short exact sequence 
\[ \xymatrix{ 
0 \ar[r] & V \ar[r]^{\psi\circ \delta^{-1}}  & M \ar[r]& F_{3} \ar[r] & 0 
} \]
with the module $V \in \V$ and the module $F_{3} \in \SE^{\perp} \cap \F$. 
Consequently, we can conclude that the module $M$ lies in $\V*(\SE^{\perp} \cap \F)=\Y$. 

We now complete the proof that the pair $(\X, \Y)$ is a torsion theory in $\RMod$. 
\end{proof}

%10
%========================================
%= corollary of the middle mutation
%========================================
A consequence of Theorem \ref{middle-mutation} deduces that if there exists a certain middle mutation, then the left mutation and the right mutation are special cases of middle mutations. 

\begin{corollary}\setlength{\leftmargini}{18pt}
Let $(\T, \F)$ be an $\SE$-torsion theory. 
Then the following conditions are equivalent.
\begin{enumerate}
\item The pair $\Psi^{M}_{(\SE, 0, \SE)}((\T, \F))=\left( \T \cap {}^{\perp} \SE, \SE*(\SE^{\perp} \cap \F) \right)$ is a torsion theory in $\RMod$. 

\item The pair $\Psi^{M}_{(\SE, \SE,0)}((\T, \F))=\left( (\T \cap {}^{\perp} \SE)*\SE, \SE^{\perp} \cap \F \right)$ is a torsion theory in $\RMod$. 

\item The pair $\Psi^{L}_{\SE}((\T, \F))=\left( \T \cap {}^{\perp} \SE, \F \right)$ is a torsion theory in $\RMod$ and the heart $\SE$ is right canonical. 

\item The pair $\Psi^{R}_{\SE}((\T, \F))=\left( \T , \SE^{\perp} \cap \F \right)$ is a torsion theory in $\RMod$ and the heart $\SE$ is left canonical. 

\item The pair $(\T, \F)$ is canonical, and the heart $\SE$ is left canonical and right canonical. 
\end{enumerate}
In particular, if one of the above conditions holds, then one has the equalities $\Psi^{M}_{(\SE, 0, \SE)}((\T, \F))=\Psi^{L}_{\SE}((\T, \F))$ and $\Psi^{M}_{(\SE, \SE,0)}((\T, \F))=\Psi^{R}_{\SE}((\T, \F))$. 
\end{corollary}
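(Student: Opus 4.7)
The plan is to observe that this corollary is a direct synthesis of Theorem~\ref{middle-mutation}, Theorem~\ref{left-mutation}, and Theorem~\ref{right-mutation}, with condition (5) serving as the common hub through which all equivalences pass. The decisive observation is that when we specialize the pair $(\U, \V)$ to either $(0, \SE)$ or $(\SE, 0)$, condition (f)(iii) of Theorem~\ref{middle-mutation} trivializes: in both cases one has $\Hom_{R}(\U, \V)=0$ vacuously, and $\U * \V = 0 * \SE = \SE$ (respectively $\SE * 0 = \SE$).

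First I would establish $(1) \Leftrightarrow (5)$ by applying Theorem~\ref{middle-mutation} to $(\U, \V) = (0, \SE)$; since condition (f)(iii) is automatic, what remains are exactly the two hypotheses appearing in (5). The equivalence $(2) \Leftrightarrow (5)$ follows identically by using $(\U, \V) = (\SE, 0)$. Next, to show $(3) \Leftrightarrow (5)$, I invoke Theorem~\ref{left-mutation}: the left $\SE$-separation being a torsion theory is equivalent to $(\T, \F)$ canonical and $\SE$ left canonical; appending the right canonical assumption appearing in (3) then matches (5) exactly. The equivalence $(4) \Leftrightarrow (5)$ is entirely analogous via Theorem~\ref{right-mutation}.

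For the ``In particular'' claim, I would compute the middle separations under assumption (5). Using Lemma~\ref{basic-lemma} (3), which gives $\SE = \SE \cap \F = \T \cap \SE$, we can rewrite
\[
\SE * (\SE^{\perp} \cap \F) = (\SE \cap \F) * (\SE^{\perp} \cap \F) = \F,
\]
where the second equality uses that $\SE$ is right canonical. This yields $\Psi^{M}_{(\SE, 0, \SE)}((\T, \F)) = (\T \cap {}^{\perp} \SE, \F) = \Psi^{L}_{\SE}((\T, \F))$. Symmetrically,
\[
(\T \cap {}^{\perp} \SE) * \SE = (\T \cap {}^{\perp} \SE) * (\T \cap \SE) = \T
\]
by left canonicity, giving $\Psi^{M}_{(\SE, \SE, 0)}((\T, \F)) = (\T, \SE^{\perp} \cap \F) = \Psi^{R}_{\SE}((\T, \F))$.

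No step poses a genuine obstacle here; the only point requiring a bit of care is recognizing that the specialization of the $(\U, \V)$-parameter to the two extreme choices is the exact mechanism that makes the generality of Theorem~\ref{middle-mutation} collapse to the simpler statements about left and right mutations. Once this observation is made, the proof reduces to bookkeeping with Lemma~\ref{basic-lemma} and direct citations of the three preceding theorems.
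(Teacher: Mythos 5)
Your proposal is correct and follows essentially the same route as the paper: specialize $(\U,\V)$ to $(0,\SE)$ and $(\SE,0)$ in Theorem \ref{middle-mutation} (noting condition (c) there holds automatically) to get $(1)\Leftrightarrow(5)$ and $(2)\Leftrightarrow(5)$, use Theorems \ref{left-mutation} and \ref{right-mutation} for $(3)\Leftrightarrow(5)\Leftrightarrow(4)$, and derive the final equalities from Lemma \ref{basic-lemma} (3) together with the definitions of left and right canonical heart.
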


\begin{proof}
We note that the pairs $(0, \SE)$ and $(\SE, 0)$ satisfy condition (c) in Theorem \ref{middle-mutation}. 
Therefore, implications (1) $\Leftrightarrow$ (5) and (2) $\Leftrightarrow$ (5) are respectively obtained as the cases of $(\U, \V)=(0, \SE)$ and $(\U, \V)=(\SE, 0)$ for Theorem \ref{middle-mutation}. 
On the other hand, Theorems \ref{left-mutation} and \ref{right-mutation} imply that implications (3) $\Leftrightarrow$ (5) $\Leftrightarrow$ (4). 

Finally, the last statement follows from the definition of left canonical heart, the definition of right canonical heart, and Lemma \ref{basic-lemma} (3). 
\end{proof}

%10
%========================================
%= corollary of the middle mutation
%========================================
For a torsion theory $(\X, \Y)$ in $\RMod$, we recall that the middle $(\SE, \U, \V)$-mutation of $(\X, \Y)$ is also a torsion theory in $\RMod$. 
We close this section by investigating the existence of the middle $(\SE, \U, \V)$-mutation of $(\X, \Y)$. 

\begin{corollary}\setlength{\leftmargini}{18pt}\label{corollary-middle-mutation}
Let $\U$ and $\V$ be subcategories that are contained in a Serre subcategory $\SE$. 
For a torsion theory $(\X, \Y)$ in $\RMod$, the following conditions $\mathrm{(1)}$ and $\mathrm{(2)}$ are equivalent. 
\begin{enumerate}
\item The pair $(\X, \Y)$ has the middle $(\SE, \U, \V)$-mutation 
\[ \Psi^{CM}_{(\SE, \U, \V)}((\X, \Y))= \left(\, ( (\X*\SE) \cap {}^{\perp} \SE )*\U, \V*(\SE^{\perp} \cap (\SE*\Y) ) \, \right).\] 

\item The following two conditions are satisfied: 
\begin{enumerate}
\item The heart $\SE$ of the $\SE$-connection $\Phi_{\SE}((\X, \Y))$ is left canonical and right canonical. 

\item One has the equalities $\Hom_{R}(\U, \V)=0$ and $\U*\V=\SE$. 
\end{enumerate}
\end{enumerate}
\end{corollary}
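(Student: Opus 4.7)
The plan is to reduce the Corollary to Theorem \ref{middle-mutation} along the same lines as the proofs of Corollaries \ref{corollary-left-mutation} and \ref{corollary-right-mutation}. Setting $\T = \X*\SE$ and $\F = \SE*\Y$, the first observation is that by definition
\[ \Psi^{CM}_{(\SE,\U,\V)}((\X,\Y)) = (\Psi^{M}_{(\SE,\U,\V)} \circ \Phi_{\SE})((\X,\Y)) = \Psi^{M}_{(\SE,\U,\V)}((\T,\F)), \]
so the question reduces to asking when the middle $(\SE,\U,\V)$-separation of the $\SE$-torsion theory $(\T,\F)$ is a torsion theory in $\RMod$.

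I would then verify that $(\T,\F)$ is automatically a canonical $\SE$-torsion theory. The fact that $\Phi_{\SE}((\X,\Y))=(\T,\F)$ is an $\SE$-torsion theory is \cite[Proposition 5.1]{Y-2024}, and canonicity follows from Remark \ref{basic-remark-ordinal} (3) applied to the torsion theory $(\X,\Y)$, which yields
\[ \RMod = \X*\Y \subseteq (\X*\SE)*(\SE*\Y) = \T*\F \subseteq \RMod. \]
This is precisely the same argument used inside the proof of Corollary \ref{corollary-left-mutation}.

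With this preparation in place, Theorem \ref{middle-mutation} can be applied directly to the pair $(\T,\F)$ together with $(\U,\V)$. It asserts that condition (1) of the Corollary is equivalent to the conjunction of three items: that $(\T,\F)$ is canonical, that the heart $\SE$ is both left canonical and right canonical for $(\T,\F)$, and that $\Hom_{R}(\U,\V)=0$ together with $\U*\V=\SE$. Since the first of these three items is automatic by the previous paragraph, the remaining two are exactly conditions (a) and (b) of statement (2), and the equivalence follows.

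I expect no real obstacle: the whole argument is bookkeeping, consisting in noticing that the canonicity of the $\SE$-connection $(\T,\F)$ is free whenever one starts with an ordinary torsion theory, so that the three-part criterion of Theorem \ref{middle-mutation} collapses to the two-part criterion of the Corollary. I anticipate the final write-up to be only a few lines long, entirely parallel in structure to the proofs of Corollaries \ref{corollary-left-mutation} and \ref{corollary-right-mutation}.
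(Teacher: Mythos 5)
Your proposal is correct and follows exactly the paper's own argument: the paper likewise sets $\T=\X*\SE$, $\F=\SE*\Y$, identifies $\Psi^{CM}_{(\SE,\U,\V)}((\X,\Y))$ with $\Psi^{M}_{(\SE,\U,\V)}((\T,\F))$, and invokes Theorem \ref{middle-mutation} after noting (as in the proof of Corollary \ref{corollary-left-mutation}) that the $\SE$-connection is automatically a canonical $\SE$-torsion theory. No gaps.
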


\begin{proof}
Let $\T=\X*\SE$ and $\F=\SE*\Y$. 
We then have the equalities 
\[ \Psi^{CM}_{(\SE, \U, \V)}((\X, \Y))=(\Psi^{M}_{(\SE, \U, \V)} \circ \Phi_{\SE})((\X, \Y))=\Psi^{M}_{(\SE, \U, \V)}((\T, \F)). \]
By the same argument as the proof of Corollary \ref{corollary-left-mutation}, our statements follow from Theorem \ref{middle-mutation}. 
 \end{proof}

%60
%========================================
%=
%= Example
%=
%========================================
\section{An example of mutations}\label{section-example}
The purpose of this section is to give an example of left, right and middle mutations. 
Although Example \ref{example-mutations} below is basic, we can provide an example of these three mutations that differ from one another.

%========================================
%= preparation
%========================================
We begin by preparing to illustrate this example. 
The following two lemmas state that Serre subcategories with a special injective module over a local ring are quite restricted to the behavior of the left orthogonal subcategories and the situation in which they are left canonical. 

We recall that a Serre subcategory $\SE$ is called {\it stable} if $\SE$ satisfies the following condition: if an $R$-module $M$ lies in $\SE$, then the injective hull $E_{R}(M)$ of $M$ also lies in $\SE$. 

\begin{lemma}\label{orthogonal-vanish-lemma}
Let $R$ be a local ring with maximal ideal $\m$. 
If the injective hull $E_{R}(R/\m)$ of the $R$-module $R/\m$ lies in a subcategory $\C$, then one has the equality $^{\perp} \C=0$. 
In particular, a stable Serre subcategory $\SE$ with a non-zero $R$-module satisfies the equality $^{\perp} \SE=0$. 
\end{lemma}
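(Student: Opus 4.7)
The plan is to exploit the fact that, over a local ring $R$ with maximal ideal $\m$, the injective hull $E=E_{R}(R/\m)$ is an \emph{injective cogenerator}: every non-zero $R$-module admits a non-zero homomorphism into $E$. Once this is established, the hypothesis $E\in\C$ forces ${}^{\perp}\C=0$ directly, since any $M\in{}^{\perp}\C$ satisfies $\Hom_{R}(M,E)=0$ and hence must be zero.

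For the first assertion, I would argue as follows. Take a non-zero $R$-module $M$ and pick $0\neq x\in M$. The cyclic submodule $Rx$ is isomorphic to $R/I$ with $I=\mathrm{Ann}_{R}(x)\neq R$. Because $R$ is local, $I\subseteq\m$, so the canonical projection $R/I\twoheadrightarrow R/\m$ is non-zero. Composing with the inclusion $R/\m\hookrightarrow E$ yields a non-zero homomorphism $Rx\to E$. Injectivity of $E$ then extends this to a homomorphism $M\to E$, which remains non-zero because its restriction to $Rx$ is non-zero. Thus, whenever $E\in\C$, the module $M$ cannot lie in ${}^{\perp}\C$; the contrapositive gives ${}^{\perp}\C=0$.

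For the \textit{in particular} part, I would reduce to the first assertion by showing that a stable Serre subcategory $\SE$ containing a non-zero module must contain $E=E_{R}(R/\m)$. Given $0\neq M_{0}\in\SE$, pick $0\neq x\in M_{0}$; then $Rx\cong R/I\subseteq M_{0}$ lies in $\SE$ since Serre subcategories are closed under submodules. Since $I\subseteq\m$, the residue field $R/\m$ is a quotient of $R/I$, hence $R/\m\in\SE$ by closure under quotients. Finally, stability of $\SE$ gives $E=E_{R}(R/\m)\in\SE$, so the first part applies with $\C=\SE$ to yield ${}^{\perp}\SE=0$.

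There is no serious technical obstacle; the only point to watch is the passage from the cyclic submodule $Rx\cong R/I$ down to the residue field $R/\m$, which is precisely where the locality hypothesis on $R$ enters (non-local rings can admit non-zero modules all of whose cyclic submodules annihilate under any fixed maximal ideal). Everything else is a standard application of injectivity of $E$ and the defining closure properties of a (stable) Serre subcategory.
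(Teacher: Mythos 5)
Your proof is correct and follows the same two-step strategy as the paper: first, $E_R(R/\m)\in\C$ forces ${}^{\perp}\C=0$ because every non-zero module maps non-trivially to $E_R(R/\m)$; second, a stable Serre subcategory with a non-zero object must contain $E_R(R/\m)$ via closure under submodules, quotients, and injective hulls. The only differences are cosmetic: you establish the cogenerator property of $E_R(R/\m)$ directly from the cyclic submodule $Rx\cong R/I$ with $I\subseteq\m$, whereas the paper invokes the injectivity of the evaluation map $M\to\Hom_R(\Hom_R(M,E_R(R/\m)),E_R(R/\m))$ from Brodmann--Sharp (an equivalent fact), and in the second step you pass through $Rx\cong R/I$ where the paper uses $R/\p$ for an associated prime $\p$ of the non-zero module; both reductions to $R/\m$ are valid.
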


\begin{proof}
We recall that any $R$-module $M$ has the injective $R$-homomorphism
\[ \varphi_{M} : M \to \Hom_{R}(\Hom_{R}(M, E_{R}(R/\m)), E_{R}(E/\m)) \]
for which $(\varphi_{M}(x))(f)=f(x)$ for all $x \in M$ and $f \in \Hom_{R}(M, E_{R}(R/\m))$. (For example, see \cite[Remarks 10.2.2 (i)]{BS-1998}.) 
 
We now take an $R$-module $M \in {}^{\perp} \C$. 
Since our assumption says that the module $E_{R}(R/\m)$ lies in $\C$, we have the equality $\Hom_{R}(M, E_{R}(R/\m))=0$. 
Therefore, the injective map $\varphi_{M}$ gives the equality $M=0$. 

Next suppose that $\SE$ is a stable Serre subcategory with a non-zero $R$-module $S$. 
Then we can take a prime ideal $\p \in \Ass_{R}(S)$ with an injective map $R/\p \to S$. 
Additionally, we have the natural surjective map $R/\p \to R/\m$. 
We note that the stable Serre subcategory $\SE$ is closed under submodules, quotient modules, and injective hulls. 
Hence, we obtain that $E_{R}(R/\m) \in \SE$. 
Consequently, the above argument yields the equality ${}^{\perp} \SE=0$. 
\end{proof}

%10
\begin{lemma}\setlength{\leftmargini}{18pt}\label{stable-heart}
Let $R$ be a local ring with maximal ideal $\m$ and $(\T, \F)$ be an $\SE$-torsion theory. 
If the heart $\SE$ has the injective $R$-module $E_{R}(R/\m)$, then the following conditions are equivalent.
\begin{enumerate}
\item The heart $\SE$ is left canonical. 

\item One has the equality $\T=\SE$. 
\end{enumerate}
\end{lemma}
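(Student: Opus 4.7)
The key observation is that the hypothesis $E_R(R/\m) \in \SE$, combined with Lemma \ref{orthogonal-vanish-lemma}, forces ${}^{\perp} \SE = 0$. Once this is in hand, the equivalence reduces to almost trivial manipulations using Lemma \ref{basic-lemma} (3) (which gives $\SE \subseteq \T$ for any $\SE$-torsion theory). The plan is therefore to apply Lemma \ref{orthogonal-vanish-lemma} at the outset and then verify each implication in one short step.

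For the implication $(1) \Rightarrow (2)$, I would reason as follows. The assumption that $E_R(R/\m) \in \SE$ together with Lemma \ref{orthogonal-vanish-lemma} yields ${}^{\perp} \SE = 0$, and hence $\T \cap {}^{\perp}\SE = 0$. Using the left canonical assumption $(\T \cap {}^{\perp}\SE) * (\T \cap \SE) = \T$, any $T \in \T$ sits in a short exact sequence $0 \to 0 \to T \to T' \to 0$ with $T' \in \T \cap \SE$, so $T \in \T \cap \SE \subseteq \SE$. This gives $\T \subseteq \SE$, and the reverse inclusion $\SE \subseteq \T$ is Lemma \ref{basic-lemma} (3). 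Therefore $\T = \SE$.

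For the implication $(2) \Rightarrow (1)$, suppose $\T = \SE$. Then $\T \cap \SE = \SE = \T$, while $\T \cap {}^{\perp}\SE = \SE \cap {}^{\perp}\SE = 0$ by Lemma \ref{orthogonal-vanish-lemma}. Therefore
\[
(\T \cap {}^{\perp}\SE) * (\T \cap \SE) = 0 * \SE = \SE = \T,
\]
so $\SE$ is left canonical.

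I do not anticipate a genuine obstacle here; the substantive content is entirely carried by Lemma \ref{orthogonal-vanish-lemma}. The only minor point to watch is the convention $0 * \C = \C$ for the extension subcategory, which is immediate from the definition via the short exact sequence $0 \to 0 \to C \to C \to 0$ and the assumption that all subcategories contain the zero module.
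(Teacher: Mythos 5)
Your proof is correct and follows essentially the same route as the paper: both directions reduce to the observation that $E_R(R/\m)\in\SE$ forces ${}^{\perp}\SE=0$ via Lemma~\ref{orthogonal-vanish-lemma}, combined with $\T\cap\SE=\SE\subseteq\T$ from Lemma~\ref{basic-lemma}~(3). The only cosmetic difference is that for $(2)\Rightarrow(1)$ the paper instead invokes the general (hypothesis-free) fact that $\C\cap{}^{\perp}\C=0$ for any subcategory $\C$, whereas you re-use Lemma~\ref{orthogonal-vanish-lemma}; both are equally valid here.
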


\begin{proof}
$(1) \Rightarrow (2)$: 
Since the injective $R$-module $E_{R}(R/\m)$ lies in the heart $\SE$, Lemma \ref{orthogonal-vanish-lemma} yields that the equality ${}^{\perp} \SE=0$. 
Lemma \ref{basic-lemma} (3) deduces that the left canonical heart $\SE$ provides the equalities $\T=(\T \cap {}^{\perp} \SE)*(\T \cap \SE)=0*\SE=\SE$. 

$(2) \Rightarrow (1)$: 
We note that any subcategory $\C$ has the equality $\C \cap {}^{\perp} \C=0$. 
Indeed, for each module $C \in \C \cap {}^{\perp} \C$, the identity map $\mathrm{id}_{C}$ on $C$ belongs to $\Hom_{R}({}^{\perp} \C, \C)=0$. 
We thus have $C=0$. 
 
If we suppose that the equality $\T=\SE$ holds, then we obtain the equalities $(\T \cap {}^{\perp} \SE)*(\T \cap \SE)=(\T \cap {}^{\perp} \T)*(\T \cap \T)=0*\T=\T$. 
Therefore, the heart $\SE$ is left canonical. 
\end{proof}

%30
For a specialization closed subset $W$ of $\Spec(R)$, the torsion functor $\G_{W}$ provides subcategories $\T_{W}$ and $\F_{W}$ as follows: 
\begin{align*}
\T_{W}&=\{ T \in \RMod \mid  \Supp_{R}(T) \subseteq W \}=\{ T \in \RMod \mid  \G_{W}(T)=T \}, \\
\F_{W}&=\{ F \in \RMod \mid \Ass_{R}(F) \cap W=\emptyset \}=\{ F \in \RMod \mid \G_{W}(F)=0 \}. 
\end{align*}
We recall that the pair $(\T_{W}, \F_{W})$ is a torsion theory in $\RMod$ with the equality $\T_{W}*\F_{W}=\RMod$ and that the torsion part $\T_{W}$ is a stable Serre subcategory. 
 (For example, see Remark \ref{basic-remark-ordinal}, \cite[Lemma 3.2.7 (a)]{BH-1998}, \cite[Proposition 2.3]{HPST-2014}, and \cite[Corollary 2.7]{K-2008}.)  
These facts are frequently used in Example \ref{example-mutations} below.

%10
We are now ready to construct an example of $\SE$-connection and various mutations, which is the main purpose of this section. 
In the following example, we will suppose that the specialization closed subset $W$ is a non-empty set to avoid the case where the heart $\SE=\T_{W}=0$. (See also Remark \ref{remark-mutations}.) 

\begin{example}\setlength{\leftmargini}{18pt}\label{example-mutations}
We suppose that $R$ is a local ring with maximal ideal $\m$. 
Let $V$ and $W$ be specialization closed subsets of $\Spec(R)$ such that $W$ is a non-empty set. 
We consider the torsion theory $(\X, \Y)=(\T_{V}, \F_{V})$ in $\RMod$, the stable Serre subcategory $\SE=\T_{W}$,  the extension subcategories $\T=\X*\SE=\T_{V}*\T_{W}$, and $\F=\SE*\Y=\T_{W}*\F_{V}$. 
Then the following statements hold.
\begin{enumerate}
\item The pair $(\T, \F)$ is a canonical $\SE$-torsion theory. 

\item The heart $\SE$ is right canonical, and the pair $(\T, \SE^{\perp} \cap \F)$ is a torsion theory in $\RMod$. 

\item The following conditions are equivalent: 
\begin{enumerate}
\item The pair $(\T \cap {}^{\perp} \SE, \F)$ is a torsion theory in $\RMod$.

\item The heart $\SE$ is left canonical. 

\item One has the equality $\T=\SE$. 
\end{enumerate}

\item Let $Z$ be a specialization closed subset of $\Spec(R)$. 
We consider the subcategories $\U=\SE \cap \T_{Z}$ and $\V=\SE \cap \F_{Z}$. 
Then one has the equalities $\Hom_{R}(\U, \V)=0$ and $\U*\V=\SE$. 

\item Let $\U$ and $\V$ be as in statement $(4)$. Then the following conditions are equivalent: 
\begin{enumerate}
\item The pair $\left( (\T \cap {}^{\perp} \SE)*\U, \V*(\SE^{\perp} \cap \F) \right)$ is a torsion theory in $\RMod$. 

\item The heart $\SE$ is left canonical. 
\end{enumerate}

\item We suppose that the above three specialization closed subsets satisfy that $V, Z \subseteq W$. 
Then one has the following equalities.
\begin{enumerate}
\item The $\SE$-torsion theory that is the $\SE$-connection for the torsion theory $(\X, \Y)$:
\[ (\T, \F)=\Phi_{\SE}((\X, \Y ))=(\X*\SE, \SE*\Y )=(\T_{W}, \RMod ). \]

\item The left $\SE$-mutation of $(\X, \Y)$ and the left mutation of $(\T, \F)$ at the heart $\SE$:  
\[ \Psi^{CL}_{\SE}((\X, \Y))=\Psi^{L}_{\SE}((\T, \F))=(\T \cap {}^{\perp} \SE, \F)=(0, \RMod).\]

\item The right $\SE$-mutation of $(\X, \Y)$ and the right mutation of $(\T, \F)$ at the heart $\SE$: 
\[ \Psi^{CR}_{\SE}((\X, \Y))=\Psi^{R}_{\SE}((\T, \F))=(\T, \SE^{\perp} \cap \F)=(\T_{W}, \F_{W}).\] 

\item The middle $(\SE, \U, \V)$-mutation of $(\X, \Y)$ and the middle mutation of $(\T, \F)$ at the triple $(\SE, \U, \V)$: 
\[ \Psi^{CM}_{(\SE, \U, \V)}((\X, \Y))=\Psi^{M}_{(\SE, \U, \V)}((\T, \F))=\left( (\T \cap {}^{\perp} \SE)*\U, \V*(\SE^{\perp} \cap \F) \right)=(\T_{Z}, \F_{Z}). \]
\end{enumerate}
\end{enumerate}

\vspace{5pt}
%3
%---
\noindent
(1): Since $\SE=\T_{W}$ is a Serre subcategory, \cite[Lemma 2.6]{Y-2022} deduces that the pair $(\T, \F)$ is an $\SE$-torsion theory. (See also \cite[Proposition 5.1]{Y-2024}.) 
Additionally, we have $\T*\F \supseteq \T_{V}*\F_{V}=\RMod$. 
Therefore, the pair $(\T, \F)$ is canonical.

%3
%---
\noindent
(2): To prove the equality $(\SE \cap \F)*(\SE^{\perp} \cap \F)=\F$, we only need to show $(\SE \cap \F)*(\SE^{\perp} \cap \F) \supseteq \F$ by Lemma \ref{basic-lemma} (4).

We begin by showing the equality $(\SE \cap \F)*(\SE^{\perp} \cap \F)=\T_{W}*(\F_{V} \cap \F_{W})$. 
Lemma \ref{basic-lemma} (3) gives the equalities $\SE \cap \F=\SE=\T_{W}$. 
Therefore, it remains to prove that the following equalities hold:  
\[ \SE^{\perp} \cap \F=\T_{W}^{\perp} \cap (\T_{W}*\F_{V})=\F_{W} \cap (\T_{W}*\F_{V})=\F_{V} \cap \F_{W}.\] 
The second equality follows from the well-known equality $\T_{W}^{\perp}=\F_{W}$. 
Next, it is easy to see that $\F_{W}$ and $\T_{W}*\F_{V}$ contains $\F_{V} \cap \F_{W}$. 
Conversely, we take an $R$-module $M \in \F_{W} \cap(\T_{W} * \F_{V})$. 
Then there exists a short exact sequence $0 \to T_{1} \overset{\varphi}{\to} M \to F_{1} \to 0$  with $T_{1} \in \T_{W}$ and $F_{1} \in \F_{V}$. 
Since the map $\varphi$ belongs to $\Hom_{R}(\T_{W}, \F_{W})=0$, we obtain an isomorphism $M \cong F_{1} \in \F_{V}$. 
Therefore, the module $M$ lies in $\F_{V} \cap \F_{W}$. 

Now, to prove the relation $\T_{W}*(\F_{V} \cap \F_{W}) \supseteq \F$, we take an $R$-module $F \in \F=\T_{W}*\F_{V}$. 
This module $F$ has a short exact sequence $0 \to T_{2} \to F \to F_{2} \to 0$ with $T_{2} \in \T_{W}$ and $F_{2} \in \F_{V}$. 
Additionally, we take the short exact sequence $ 0 \to \G_{W}(F_{2}) \to F_{2} \to F_{2}/\G_{W}(F_{2}) \to 0$. 
These short exact sequences provide the following pull-back diagram  
\[  \xymatrix{
& & 0 \ar[d] & 0 \ar[d] & & \\
0 \ar[r] & T_{2} \ar[r] \ar@{=}[d] & T_{3} \ar[r] \ar[d] &\G_{W}(F_{2}) \ar[r] \ar[d] & 0 \\
0 \ar[r] & T_{2} \ar[r] & F \ar[r] \ar[d] & F_{2} \ar[r] \ar[d]& 0 \\
& & F_{2}/\G_{W}(F_{2}) \ar@{=}[r] \ar[d] & F_{2}/\G_{W}(F_{2}) \ar[d] & & \\
& & 0 & 0 & & \\
  } \]
of $R$-modules with exact rows and columns.

Since the module $\G_{W}(F_{2})$ lies in $\T_{W}$, the upper row of the diagram deduces that the module $T_{3}$ lies in $\T_{W}$. 
On the other hand, \cite[Lemma 2.1]{Y-2012} implies the equality $\Ass_{R}(F_{2})=\Ass_{R}(\G_{W}(F_{2})) \cup \Ass_{R}\left( F_{2}/\G_{W}(F_{2}) \right)$. 
This equality yields $\Ass_{R}\left( F_{2}/\G_{W}(F_{2}) \right) \cap V \subseteq \Ass_{R}(F_{2}) \cap V=\emptyset$. 
Therefore, the module $F_{2}/\G_{W}(F_{2})$ lies in $\F_{V}$. 
Since this module $F_{2}/\G_{W}(F_{2})$ also lies in $\F_{W}$, the middle column of the diagram deduces that the module $F$ lies in $\T_{W}*(\F_{V} \cap \F_{W})$. 
This completes the proof of right canonical for the heart $\SE$. 

The latter assertion in statement (2) is immediate from Theorem \ref{right-mutation} because the pair $(\T, \F)$ is a canonical $\SE$-torsion theory by statement (1) and the heart $\SE$ is right canonical by the above argument. 

In addition to this proof, the following argument also provides evidence that the pair $(\T, \SE^{\perp} \cap \F)$ is a torsion theory in $\RMod$. 
Namely, let us show that each $R$-module $M$ has the canonical short exact sequence $0 \to T \to M \to F \to 0$ with $T \in \T=\T_{V}*\T_{W}$ and $F \in \SE^{\perp} \cap \F=\F_{V} \cap \F_{W}$. 
We take the short exact sequence $0 \to \G_{V}(M) \to M \to M/\G_{V}(M) \to 0$.
Let us set $N=M/\G_{V}(M)$. 
The module $N$ has the short exact sequence $0 \to \G_{W}(N) \to N \to N/\G_{W}(N) \to 0$. 
If we denote the module $N/\G_{W}(N)$ by $F$, then these short exact sequences provide the following pull-back diagram
\[  \xymatrix{
& & 0 \ar[d] & 0 \ar[d] & & \\
0 \ar[r] & \G_{V}(M) \ar[r] \ar@{=}[d] & T \ar[r] \ar[d] & \G_{W}(N) \ar[r] \ar[d] & 0 \\
0 \ar[r] & \G_{V}(M) \ar[r] & M \ar[r] \ar[d] & N \ar[r] \ar[d]& 0 \\
& & F \ar@{=}[r] \ar[d] & F \ar[d] & & \\
& & 0 & 0 & & \\
  } \]
of $R$-modules with exact rows and columns.

The upper row of the second diagram implies that the module $T$ lies in $\T_{V}*\T_{W}$. 
On the other hand, \cite[Lemma 2.1]{Y-2012} yields the equality $\Ass_{R}(N)=\Ass_{R}(\G_{W}(N)) \cup \Ass_{R}(F)$. 
Since the module $N=M/\G_{V}(M) \in \F_{V}$, we obtain that $\Ass_{R}(F) \cap V \subseteq \Ass_{R}(N) \cap V=\emptyset$. 
Therefore, the module $F$ lies in $\F_{V}$. 
Since the module $F=N/\G_{W}(N)$ also lies in $\F_{W}$, we achieve that this module $F$ lies in $\F_{V} \cap \F_{W}$.
Consequently, the middle column of the second diagram provides the desired short exact sequence for the module $M$.

%3
%---
\noindent
(3): We note that the pair $(\T, \F)$ is a canonical $\SE$-torsion theory by statement (1) and the heart $\SE=\T_{W}$ has the injective $R$-module $E_{R}(R/\m)$. 
Therefore implications (a) $\Leftrightarrow$ (b) and (b) $\Leftrightarrow$ (c) respectively follow from Theorem \ref{left-mutation} and Lemma \ref{stable-heart}. 

In addition to these implications, if one of the equivalent conditions (a)-(c) holds, then we can give the following equality 
\[ (\T \cap {}^{\perp} \SE, \F)=(0, \RMod).\]  
Indeed, the equality $\T=\SE$ in condition (c) and the proof of Lemma \ref{stable-heart} yield the equalities $\T \cap {}^{\perp} \SE=\T \cap {}^{\perp} \T=0$. 
On the other hand, the equality $\T=\SE$ also deduces that $\F=\SE*\Y=\T*\F_{V}=(\T_{V}*\T_{W})*\F_{V} \supseteq \T_{V}*\F_{V}=\RMod$. 
Therefore, we have the equality $\F=\RMod$.

%3
%---
\noindent
(4): The torsion theory $(\T_{Z}, \F_{Z})$ in $\RMod$ implies that 
\[ \Hom_{R}(\U, \V)=\Hom_{R}(\SE \cap \T_{Z}, \SE \cap \F_{Z}) \subseteq \Hom_{R}(\T_{Z}, \F_{Z})=0. \]  

On the other hand, each $R$-module $S \in \SE$ has the short exact sequence $0 \to \G_{Z}(S) \to S \to S/\G_{Z}(S) \to 0$. 
Since $\SE$ is a Serre subcategory, we obtain that $\G_{Z}(S) \in \SE \cap \T_{Z}$ and $S/\G_{Z}(S) \in \SE \cap \F_{Z}$. 
Consequently, we can achieve the equality $(\SE \cap \T_{Z})*(\SE \cap \F_{Z})=\SE$.

%3
%-----
\noindent
(5): Our assertion follows from statements (1), (2), (4) and Theorem \ref{middle-mutation}. 

Moreover, if one of the equivalent conditions (a) and (b) holds, then we can also see that the following equalities hold: 
\[ (\T \cap {}^{\perp} \SE)*\U= \T_{W} \cap \T_{Z} \ \text{ and } \ \V*(\SE^{\perp} \cap \F)=(\T_{W} \cap \F_{Z})*(\F_{V} \cap \F_{W}).\]   
Indeed, we have already obtained the equality $\T \cap {}^{\perp} \SE=0$ in the argument of statement (3). 
Thus, we obtain the equalities $(\T \cap {}^{\perp} \SE)*\U=0*\U=\T_{W} \cap \T_{Z}$. 
On the other hand, we proved the equality $\SE^{\perp} \cap \F=\F_{V} \cap \F_{W}$ in the argument of statement (2). 
Consequently, we achieve the equality $\V*(\SE^{\perp} \cap \F)=(\T_{W} \cap \F_{Z})*(\F_{V} \cap \F_{W})$.

%3
%-----
\noindent
(6): Note that our assumption $V, Z \subseteq W$ implies the relations $\T_{Z} \subseteq \T_{W}$ and $\F_{V}, \F_{Z} \supseteq \F_{W}$. 
These relations will be frequently used without mention in the following argument.

%---
\noindent 
(a): Statement (1) guarantees that the pair $(\T, \F)$ is an $\SE$-torsion theory. 
Next, we note that the assumption $V \subseteq W$ provides that $V \cup W=W$. 
Therefore, \cite[Lemma 2.2 (1)]{Y-2012} deduces the equalities 
\[ \T=\X*\SE=\T_{V}*\T_{W}=\T_{V \cup W}=\T_{W}.\] 
On the other hand, we obtain that $\F=\SE*\Y=\T_{W}*\F_{V} \supseteq \T_{W} *\F_{W}=\RMod$. 
Consequently, we have the equalities $(\T, \F)=(\X*\SE, \SE*\Y)=(\T_{W}, \RMod)$. 

%---
\noindent 
(b): The equalities $\T=\T_{W}=\SE$ follow from argument of statement (6)-(a). 
Statement (3) implies that the heart $\SE$ is left canonical and the pair $(\T \cap {}^{\perp} \SE, \F)$ is a torsion theory in $\RMod$. 
Moreover, in the argument of statement (3), we have already established the equality $(\T \cap {}^{\perp} \SE, \F)=(0, \RMod)$. 

%---
\noindent 
(c): The argument of statement (2) provides that the pair $(\T, \SE^{\perp} \cap \F)$ is a torsion theory in $\RMod$ with the equality $\SE^{\perp} \cap \F=\F_{V} \cap \F_{W}$. 
Since the equality $\T=\T_{W}$ holds by argument of statement (6)-(a), we can achieve the equalities $(\T, \SE^{\perp} \cap \F)=(\T_{W}, \F_{V} \cap \F_{W})=(\T_{W}, \F_{W})$. 

%---
\noindent 
(d): Since argument of statement (6)-(a) provides the equality $\T=\SE$, statement (3) deduces that the heart $\SE$ is left canonical. 
Therefore, statement (5) guarantees that the pair $\left( (\T \cap {}^{\perp} \SE)*\U, \V*(\SE^{\perp} \cap \F) \right)$ is a torsion theory in $\RMod$. 

Next, we recall that the argument of statement (5) gives the equalities $(\T \cap {}^{\perp} \SE)*\U=\T_{W} \cap \T_{Z}$ and $\V*(\SE^{\perp} \cap \F)=(\T_{W} \cap \F_{Z})*(\F_{V} \cap \F_{W})$. 
Moreover, we also have the equalities $\T_{W} \cap \T_{Z}=\T_{Z}$ and $(\T_{W} \cap \F_{Z})*(\F_{V} \cap \F_{W})=(\T_{W} \cap \F_{Z})*\F_{W}$. 

We now claim that the equality $(\T_{W} \cap \F_{Z})*\F_{W}=\F_{Z}$ holds. 
Indeed, since the subcategory $\F_{Z}$ is closed under extension modules, we have the equality $\F_{Z}*\F_{Z}=\F_{Z}$. 
Therefore, we have the relations $(\T_{W} \cap \F_{Z})*\F_{W} \subseteq \F_{Z}*\F_{W} \subseteq \F_{Z}*\F_{Z}=\F_{Z}$. 
Conversely, let us take an $R$-module $F \in \F_{Z}$. 
Then the module $F$ has the short exact sequence $0 \to \G_{W}(F) \to F \to F/\G_{W}(F) \to 0$. 
Since the subcategory $\F_{Z}$ is closed under submodules, the module $\G_{W}(F)$ lies in $\T_{W} \cap \F_{Z}$. 
Moreover, the module $F/\G_{W}(F)$ obviously lies in $\F_{W}$. 
Therefore, the above short exact sequence implies that the module $F$ lies in $(\T_{W} \cap \F_{Z})*\F_{W}$. 
The proof of our claim is completed. 

Consequently, we can achieve the equality 
\[ \left( (\T \cap {}^{\perp} \SE)*\U, \V*(\SE^{\perp} \cap \F) \right)=(\T_{Z}, \F_{Z}).\] 
This completes the proofs of all assertions. 
\end{example}

%60
\section*{Acknowledgments}
%The author expresses  gratitude to the referees for their kind comments and valuable suggestions. 
This work was supported by JSPS KAKENHI Grant Number JP23K03060.

%60
\vspace{7pt}

\end{document}